\theoremstyle{definition}
\newtheorem{thm}{Theorem}[section]
\newtheorem{prop}{Proposition}[section]
\newtheorem{lem}{Lemma}[section]
\newtheorem{defn}{Definition}[section]
\newtheorem{rem}{Remark}[section]
\newtheorem{case}{Case}
\newcommand{\al}{\alpha}
\newcommand{\id}{\mathrm{id}}
\newcommand{\str}{\mathrm{str}}
\newcommand{\mcB}{\mathcal{B}}
\newcommand{\mcF}{\mathcal{F}}
\newcommand{\mcH}{\mathcal{H}}
\newcommand{\mcI}{\mathcal{I}}
\newcommand{\mcN}{\mathcal{N}}
\newcommand{\mcV}{\mathcal{V}}
\newcommand{\mcW}{\mathcal{W}}
\newcommand{\mfe}{\mathfrak{e}}
\newcommand{\mfg}{\mathfrak{g}}
\newcommand{\mfgl}{\mathfrak{g}\mathfrak{l}}
\newcommand{\mfsl}{\mathfrak{s}\mathfrak{l}}
\newcommand{\mfst}{\mathfrak{st}}
\newcommand{\Ima}{\mathrm{Image}}
\newcommand{\Ker}{\mathrm{Ker}}
\newcommand{\wt}{\widetilde}
\newcommand{\C}{\mathbb{C}}
\newcommand{\Z}{\mathbb{Z}}
\newcommand{\ot}{\otimes}
\newcommand\ta{\tau}
\newcommand\be{\beta}
\def\Z{\mathbb Z} \def\ZZ{\Z}
\DeclareMathOperator{\HC}{HC}
\def\dirlim{\varinjlim}
\def\uce{\mathfrak{uce}}
\title{Universal central extensions of $\mfsl_{m|n}$ over $\Z/2\Z$-graded algebras}
\author{Hongjia Chen, Jie Sun}
\date{}
\begin{document}

\maketitle

\begin{abstract}
We study central extensions of the Lie superalgebra $\mfsl_{m|n}(A)$,
where $A$ is a $\Z/2\Z$-graded superalgebra over a commutative ring $K$.
The Steinberg Lie superalgebra $\mfst_{m|n}(A)$ plays a crucial role. We show that $\mfst_{m|n}(A)$ is a central extension of $\mfsl_{m|n}(A)$ for $m+n\geq 3$.
We use a $\Z/2\Z$-graded version of cyclic homology to show that the center
of the extension is isomorphic to $\HC_1(A)$ as $K$-modules. For $m+n\geq 5$, we prove that $\mfst_{m|n}(A)$ is the universal central extension of $\mfsl_{m|n}(A)$. For $m+n=3,4$, we prove that $\mfst_{2|1}(A)$ and $\mfst_{3|1}(A)$ are both centrally closed. The universal central extension of $\mfst_{2|2}(A)$ is constructed explicitly.\\

\noindent \textbf{Keywords.} Universal central extensions, Lie superalgebras, Steinberg superalgebras.

\end{abstract}

\section{Introduction}

Central extensions are important in physics as they can reduce the study of projective representations to the study of true representations. Although finite dimensional simple Lie algebras have no nontrivial central extensions, infinite dimensional Lie algebras, by contrast, have many interesting central extensions. In order to find ``all" central extensions of a given Lie algebra, it's often helpful to determine the universal central extension, which exists for perfect Lie algebras (well-known) and superalgebras \cite{Ne}. An example of infinite dimensional Lie algebra is the (untwisted) loop algebra $\mfg\otimes_{\C} \C[t^{\pm 1}]$, where $\mfg$ is a finite dimensional simple Lie algebra over $\C$. The universal central extension of $\mfg\otimes_{\C} \C[t^{\pm 1}]$, which is one dimensional, gives an affine Kac-Moody Lie algebra. The (untwisted) multiloop algebra $\mfg\otimes_{\C} \C[t_1^{\pm 1},t_2^{\pm 1},\cdots,t_n^{\pm 1}]$ and its universal central extension play an important role in the theory of extended affine Lie algebras \cite{AABGP}, \cite{Ne2}.

In \cite{Ka1} C. Kassel constructed the universal central extensions of $\mfg\otimes _K A$ by using K\"ahler differentials, where $\mfg$ is a finite dimensional simple Lie algebra over a commutative unital  ring $K$ and $A$ is a commutative, associative, unital $K$-algebra. Kassel's model was generalized in \cite{BK} under certain conditions. Twisted forms of $\mfg\otimes _K A$, which include twisted (multi)loop algebras as examples, and their central extensions were studied in \cite{S} by using the descent theory. In particular, the universal central extension of a multiloop Lie torus was given by the descent construction. A different construction of central extensions of centreless Lie tori by using centroidal derivations was given in \cite{Ne2}. When $\mfg$ is a basic classical Lie superalgebra over $K$, central extensions of $\mfg\otimes _K A$ were given in \cite{IoKo1} by using K\"ahler differentials. The second homology of the Lie superalgebra $\mfg\otimes _K A$ and its twisted version was calculated in \cite{IoKo2}.

When specializing $\mfg=\mfsl_n$ or $\mfsl_{m|n}$, central extensions of $\mfg\otimes _K A$ for more general $A$ have been studied by several authors. In \cite{KaLo}, the universal central extension of $\mfsl_n(A)$  for $n\geq 5$, where $A$ is an associative, unital $K$-algebra, was constructed by cyclic homology of $A$. The universal central extension of $\mfsl_{m|n}(A)$ was studied in \cite{MiPi1}  for $m+n\geq 5$, and in \cite{SCG} for $m+n=3,4$. When $A$ is a $\Z/2\Z$-graded, associative, unital $K$-superalgebra, the universal central extension of $\mfsl_n(A)$ was constructed by a $\Z/2\Z$-graded version of cyclic homology of $A$ in \cite{CG}. Thus it's natural to consider the matrix Lie superalgebra $\mfsl_{m|n}(A)$, where $A$ is a $\Z/2\Z$-graded, associative, unital $K$-superalgebra, since it combines the two cases of $\mfsl_{m|n}(A)$ for $A$ a (non-super)algebra and $\mfsl_{n}(A)$ for $A$ a superalgebra. In the case where $A$ is supercommutative, $\mfsl_{m|n}(A)$ and its central extensions have applications to superconformal field theory and superstring theory (see \cite{KT}).

In this paper, after reviewing some basics of central extensions of Lie superalgebras, we define the Lie superalgebra $\mfsl_{m|n}(A)$, for $m+n\geq 3$, to be the subsuperalgebra of $\mfgl_{m|n}(A)$ generated by the elements
$E_{ij}(a)$, $1 \leq i \neq j \leq m+n$, $a \in A$, where $E_{ij}(a)$ has entry $a$ at the position $(ij)$ and $0$ elsewhere. We show that $\mfsl_{m|n}(A)=[\mfgl_{m|n}(A),\mfgl_{m|n}(A)]$. Under the usual supertrace of a matrix in $\mfgl_{m|n}(A)$, not every matrix in $\mfsl_{m|n}(A)$ has supertrace in $[A,A]$. Thus we introduce a new supertrace on $\mfgl_{m|n}(A)$ by defining $\mathrm{str}(E_{ij}(a)) : = \delta_{ij} (-1)^{|i|(|i|+|a|)}a$. Under this new supertrace, every matrix in $\mfsl_{m|n}(A)$ has supertrace in $[A,A]$. After showing the perfectness of $\mfsl_{m|n}(A)$, we study central extensions of $\mfsl_{m|n}(A)$ by defining the Steinberg Lie superalgebra $\mfst_{m|n}(A)$. We show that $\mfst_{m|n}(A)$ is a central extension of $\mfsl_{m|n}(A)$ for $m+n\geq 3$, and the kernel of this central extension is isomorphic to the first $\Z/2\Z$-graded cyclic homology group of $A$. For $m+n\geq 5$, we prove that $\mfst_{m|n}(A)$ is the universal central extension of $\mfsl_{m|n}(A)$. As an application, we describe the universal central extension of the Lie superalgebra $\mfsl_I(A)$, where $I=I_{\bar{0}}\cup I_{\bar{1}}$ is a (possibly infinite) superset with $|I|\geq 5$. For $m+n=3,4$, we prove that $\mfst_{2|1}(A)$ and $\mfst_{3|1}(A)$ are both centrally closed. The universal central extension of $\mfst_{2|2}(A)$ is constructed explicitly through a super $2$-cocycle.

\section{Review of central extensions of Lie superalgebras}

Let $K$ be a commutative unital ring. We assume throughout this paper
that $2$ is invertible in $K$. A \textit{$K$-superalgebra} is a $K$-supermodule $A=A_{\bar{0}}\oplus A_{\bar{1}}$ together with a $K$-bilinear map $A\times A\rightarrow A$, called \textit{product}, of degree $0$. A $K$-superalgebra $L$ with product $[\cdot, \cdot]$ is a \textit{Lie $K$-superalgebra} if for all $x,y,z\in L$, we have $$[x,y]=-(-1)^{|x||y|}[y,x],$$
$$[x,[y,z]]=[[x,y],z]+(-1)^{|x||y|}[y,[x,z]].$$
Formulas involving degree functions are supposed to be valid for homogeneous elements.

Let $L=L_{\bar{0}}\oplus L_{\bar{1}}$ be a Lie superalgebra over $K$. The pair
$(\wt{L},\varphi)$, where $\wt{L}=\wt{L}_{\bar{0}}\oplus \wt{L}_{\bar{1}}$ is
a Lie superalgebra and $\varphi: \wt{L} \rightarrow L$ an epimorphism,
is called a \textit{central extension} of $L$ if $\Ker(\varphi)\subset Z(\wt{L})$,
where $Z(\wt{L}):=\{z\in \wt{L}\ |\  [z,x]=0$ for all $x\in \wt{L}\}$ is the \textit{center} of $\wt{L}$.
Thus $[\Ker(\varphi), \wt{L}]=0$.
A central extension $(\wt{L}, \varphi)$ of $L$ is called \textit{universal}
if 
for any
central extension $(L', \psi)$ of $L$ there exists a unique
homomorphism $\eta: \wt{L} \rightarrow L'$ such that $\psi \circ \eta
=\varphi$. A universal central extension of $L$ exists and is then unique up to a unique isomorphism if and only if $L$ is perfect \cite{Ne}.

Similar to Lie algebras, a central extension of a Lie
$K$-superalgebra $L$ can be constructed by using a $2$-cocycle $\ta
: L \times L \to C$. Here $C$ is a $K$-supermodule, $\ta$ is
$K$-bilinear of degree $0$ whence $\ta(L_\al, L_\be) \subset
C_{\al+\be}$ for $\al,\be \in \ZZ/2\ZZ$, alternating in the sense
that $\ta(x, y) + (-1)^{|x||y|} \ta (y,x) = 0$ and satisfies
$$(-1)^{|x||z|} \ta(x, [y,z]) +(-1)^{|y||x|}\ta(y, [z,x])
+(-1)^{|z||y|}\ta(z, [x,y])=0.$$ Equivalently, a $2$-cocycle is a
map $\ta : L \times L \to C$ such that $L\oplus C$ is a Lie
superalgebra with respect to the grading $(L\oplus C)_\al = L_\al
\oplus C_\al$ and product $[l_1\oplus c_1, l_2\oplus
c_2]=[l_1,l_2]_L\oplus \tau(l_1,l_2)$ where $[.,.]_L$ is the product
of $L$. In this case, the canonical projection $L\oplus C \to L$ is
a central extension.

\section{The Lie superalgebra $\mfsl_{m|n}(A)$}

Let $A=A_{\bar{0}} \oplus A_{\bar{1}}$ be a $\Z/2\Z$-graded, associative, unital
$K$-superalgebra. Elements of $K$ have degree $0$. Let $M_{m|n}(A)$ be the $(m+n) \times (m+n)$ matrix superalgebra with
coefficients in $A$ and $\deg (E_{ij}(a)) =\deg(E_{ij})+\deg (a)
= |i|+|j|+|a|$, for $1 \leq i,j \leq m+n$ and any homogeneous element
$a \in A$, where $|i| = 0$ if $1 \leq i \leq m$, and $|i|=1$ otherwise. Here $E_{ij}(a)$ has entry $a$ at the position $(ij)$ and $0$ elsewhere. Let $\mfgl_{m|n}(A)$ be the Lie superalgebra associated to the associative superalgebra $M_{m|n}(A)$. Its product is given by $[X,Y]=XY-(-1)^{|X||Y|}YX$.

\begin{defn} For $m+n \geq 3$, the Lie superalgebra $\mfsl_{m|n}(A)$ is the
subsuperalgebra of $\mfgl_{m|n}(A)$ generated by the elements
$E_{ij}(a)$, $1 \leq i \neq j \leq m+n$, $a \in A$.
\end{defn}

The canonical matrix units $E_{ij}(a), E_{kl}(b)\in \mfsl_{m|n}(A)$ satisfy the relation $$[E_{ij}(a),E_{kl}(b)]=\delta_{jk}E_{il}(ab)-(-1)^{|E_{ij}(a)||E_{kl}(b)|}\delta_{li}E_{kj}(ba).$$ The following lemma shows that the Lie superalgebra $\mfsl_{m|n}(A)$ can be equivalently defined as $\mfsl_{m|n}(A)=[\mfgl_{m|n}(A),\mfgl_{m|n}(A)]$, the derived subsuperalgebra of
$\mfgl_{m|n}(A)$.

\begin{lem} The Lie superalgebra $\mfsl_{m|n}(A)$ satisfies $\mfsl_{m|n}(A)=[\mfgl_{m|n}(A),\mfgl_{m|n}(A)]$.
\end{lem}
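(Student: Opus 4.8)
The plan is to prove the two inclusions separately, the nontrivial one being $[\mfgl_{m|n}(A),\mfgl_{m|n}(A)]\subseteq \mfsl_{m|n}(A)$. First I would dispose of the easy direction: since each generator $E_{ij}(a)$ with $i\neq j$ can be written as a bracket, e.g. $E_{ij}(a)=[E_{ik}(a),E_{kj}(1)]$ for any $k\notin\{i,j\}$ (which exists because $m+n\geq 3$), we get $\mfsl_{m|n}(A)\subseteq[\mfgl_{m|n}(A),\mfgl_{m|n}(A)]$; and as $\mfsl_{m|n}(A)$ is closed under brackets (being a subsuperalgebra), in fact $\mfsl_{m|n}(A)=[\mfsl_{m|n}(A),\mfsl_{m|n}(A)]$ is itself perfect — this is worth recording anyway since it is used later.

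For the reverse inclusion, the strategy is to compute $[X,Y]$ for arbitrary homogeneous $X=E_{ij}(a)$, $Y=E_{kl}(b)$ using the bracket relation displayed just before the lemma, and to check that every resulting term lies in $\mfsl_{m|n}(A)$. The bracket equals $\delta_{jk}E_{il}(ab)-(-1)^{|X||Y|}\delta_{li}E_{kj}(ba)$. When the off-diagonal indices force $i\neq l$ (resp. $k\neq j$), the surviving term is an honest off-diagonal generator, hence in $\mfsl_{m|n}(A)$ by definition. The only case needing care is the diagonal contribution: this occurs when $i=l$ and $j=k$, giving $[E_{ij}(a),E_{ji}(b)]=E_{ii}(ab)-(-1)^{|E_{ij}(a)||E_{ji}(b)|}E_{jj}(ba)$, a diagonal matrix. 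So I must show every such diagonal element lies in $\mfsl_{m|n}(A)$. Since $m+n\geq 3$ there is a third index $k\notin\{i,j\}$, and one computes $[E_{ik}(a),E_{kj}(b)]=E_{ij}(ab)$ with $i\neq j$, while $[E_{ij}(ab),E_{jk}(1)]$, etc., can be chained; more directly, $E_{ii}(ab)-(-1)^{\ldots}E_{kk}(\cdot)$-type diagonal elements arise from $[E_{ik}(a),E_{ki}(b)]$, and taking differences of these for varying $k$ shows that the diagonal part of any bracket is an $A$-linear combination of such expressions, all manifestly in $\mfsl_{m|n}(A)$.

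Putting this together: an arbitrary element of $[\mfgl_{m|n}(A),\mfgl_{m|n}(A)]$ is a finite $K$-linear combination of brackets $[E_{ij}(a),E_{kl}(b)]$, each of which we have just shown lies in $\mfsl_{m|n}(A)$; hence $[\mfgl_{m|n}(A),\mfgl_{m|n}(A)]\subseteq\mfsl_{m|n}(A)$, and combined with the first paragraph we conclude equality. The main obstacle — really the only point where $m+n\geq 3$ is essential and where one must be slightly careful with signs — is verifying that the diagonal matrices $E_{ii}(ab)-(-1)^{|E_{ij}(a)||E_{ji}(b)|}E_{jj}(ba)$ produced by the "both-indices-match" case are generated by the off-diagonal $E_{ij}(a)$; this is a short but sign-sensitive computation using the existence of a third index, and it is exactly the step that fails for $m+n\le 2$.
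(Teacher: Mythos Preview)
Your overall strategy matches the paper's, but your case analysis is muddled at the one nontrivial point. When $i\neq j$, the element $[E_{ij}(a),E_{ji}(b)]$ is a bracket of two \emph{generators} of $\mfsl_{m|n}(A)$ and hence lies in $\mfsl_{m|n}(A)$ automatically, since $\mfsl_{m|n}(A)$ is by definition a sub\-superalgebra --- there is nothing to check. The only bracket in $[\mfgl_{m|n}(A),\mfgl_{m|n}(A)]$ that is not a priori in $\mfsl_{m|n}(A)$ is a bracket of two \emph{diagonal} elements of $\mfgl_{m|n}(A)$, namely $[E_{ii}(a),E_{ii}(b)]=E_{ii}([a,b])$; this is the case you never explicitly isolate (it is hidden inside your condition ``$i=l$ and $j=k$'' as the sub-case $i=j$, but your subsequent discussion of a ``third index $k\notin\{i,j\}$'' reads as though $i\neq j$). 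The paper handles it with the single identity
\[
[E_{ij}(a),E_{ji}(b)]-(-1)^{|a||b|}[E_{ij}(1),E_{ji}(ba)]=E_{ii}([a,b])\qquad(j\neq i),
\]
which is precisely the ``difference'' your sketch gestures at but never writes down.

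One further correction: this identity uses only two indices, so your final remark misplaces the hypothesis. The step where $m+n\geq 3$ is genuinely required is the \emph{easy} inclusion $\mfsl_{m|n}(A)\subseteq[\mfgl_{m|n}(A),\mfgl_{m|n}(A)]$, via $E_{ij}(a)=[E_{ik}(a),E_{kj}(1)]$ with three distinct indices --- not the diagonal computation.
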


\begin{proof}
Since $E_{ij}(a)=[E_{ik}(a), E_{kj}(1)]$ for distinct $i,j,k$, we have $\mfsl_{m|n}(A) \subseteq [\mfgl_{m|n}(A),\mfgl_{m|n}(A)]$. On the other hand, for $i\neq j$, we have
\begin{equation*}
\begin{split}
&[E_{ij}(a), E_{ji}(b)] - (-1)^{|a|\cdot |b|} [E_{ij}(1),E_{ji}(ba)]  \\
= &E_{ii}(ab) - (-1)^{(|i|+|j|+|a|)(|i|+|j|+|b|)}E_{jj}(ba) - (-1)^{|a|\cdot |b|}
(E_{ii}(ba) - (-1)^{(|i|+|j|)(|i|+|j|+|a|+|b|)}E_{jj}(ba) )\\
= & E_{ii}(ab) - (-1)^{|a|\cdot |b|} E_{ii}(ba) = E_{ii}([a,b]) \\
= & [E_{ii}(a),E_{ii}(b)].
\end{split}
\end{equation*}
Thus $[\mfgl_{m|n}(A),\mfgl_{m|n}(A)] \subseteq \mfsl_{m|n}(A)$. Hence
$\mfsl_{m|n}(A)=[\mfgl_{m|n}(A),\mfgl_{m|n}(A)]$.
\end{proof}

The usual supertrace of a matrix $X=(x_{ij})\in \mfgl_{m|n}(A)$ is given by $\sum_{i=1}^m x_{ii}-\sum_{i=m+1}^{m+n}x_{ii}$. Under this supertrace, not every matrix in $\mfsl_{m|n}(A)$ has supertrace in $[A,A]$, where $[A,A]$ is the span of all commutators $[a_1, a_2]=a_1 a_2-(-1)^{|a_1||a_2|} a_2 a_1$, $a_i\in A$. Thus we introduce a new supertrace on $\mfgl_{m|n}(A)$. Define
$\mathrm{str}(E_{ij}(a)) : = \delta_{ij} (-1)^{|i|(|i|+|a|)}a$ and extend
linearly to all of $\mfgl_{m|n}(A)$. This supertrace is equivalent to the definition in \cite{M}. If $n=0$ or $A_{\bar{1}}=\{0\}$, then this new supertrace coincides with the usual supertrace. Using this new supertrace, we have the following lemma.

\begin{lem} The Lie superalgebra $\mfsl_{m|n}(A)$ satisfies $\mfsl_{m|n}(A) \subset \{X \in \mfgl_{m|n}(A) \,|\, \mathrm{str}(X) \in [A,A]\}$. In addition, if $m\geq 1$, then we have $\mfsl_{m|n}(A) = \{X \in \mfgl_{m|n}(A) \,|\, \mathrm{str}(X) \in [A,A]\}$.
\end{lem}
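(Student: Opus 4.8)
The plan is to prove the two assertions in turn: the inclusion holds for all $m,n$, while the equality needs $m\ge 1$. For the inclusion $\mfsl_{m|n}(A)\subseteq\{X\in\mfgl_{m|n}(A)\mid\mathrm{str}(X)\in[A,A]\}$, I would invoke the previous lemma to write $\mfsl_{m|n}(A)=[\mfgl_{m|n}(A),\mfgl_{m|n}(A)]$, so that $\mfsl_{m|n}(A)$ is spanned over $K$ by brackets $[E_{ij}(a),E_{kl}(b)]$; by $K$-linearity of $\mathrm{str}$ it then suffices to show $\mathrm{str}([E_{ij}(a),E_{kl}(b)])\in[A,A]$ for all indices and all homogeneous $a,b$. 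Expanding via the matrix-unit relation, this bracket is a combination of at most two matrix units whose diagonal part survives only when $j=k$ and $i=l$; in that case the bracket equals $E_{ii}(ab)-(-1)^{(|i|+|j|+|a|)(|i|+|j|+|b|)}E_{jj}(ba)$ (with the subcase $i=j$ giving simply $E_{ii}([a,b])$), and I would carry out the sign bookkeeping, using the defining formula $\mathrm{str}(E_{ii}(c))=(-1)^{|i|(|i|+|c|)}c$ together with $|i|^2\equiv|i|$ and $(|i|+|j|)^2\equiv|i|+|j|\pmod 2$, to collapse its supertrace to $(-1)^{|i|(|i|+|a|+|b|)}\bigl(ab-(-1)^{|a||b|}ba\bigr)=\pm[a,b]\in[A,A]$.

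For the reverse inclusion under the hypothesis $m\ge 1$, I would start from $X=(x_{ij})$ with $\mathrm{str}(X)\in[A,A]$ and first subtract its strictly off-diagonal part $\sum_{i\ne j}E_{ij}(x_{ij})$, which lies in $\mfsl_{m|n}(A)$ by definition and has supertrace $0$; thus it is enough to treat a diagonal matrix $X=\sum_i E_{ii}(x_i)$. Because $m\ge 1$, the index $1$ has $|1|=0$, and for each $i\ne 1$ the matrix-unit relation yields $[E_{i1}(a),E_{1i}(1)]=E_{ii}(a)-(-1)^{|i|(1+|a|)}E_{11}(a)\in\mfsl_{m|n}(A)$; moreover a short sign check shows this element has supertrace exactly $0$. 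Reducing each $E_{ii}(x_i)$ modulo $\mfsl_{m|n}(A)$ by these relations (applied to the homogeneous components of $x_i$) gives $X\equiv E_{11}(y)\pmod{\mfsl_{m|n}(A)}$ with $\mathrm{str}(E_{11}(y))=y=\mathrm{str}(X)$, so $y\in[A,A]$. Writing $y=\sum_k c_k[a_k,b_k]$ with $c_k\in K$ and using the identity from the proof of the previous lemma, $E_{11}([a_k,b_k])=[E_{12}(a_k),E_{21}(b_k)]-(-1)^{|a_k||b_k|}[E_{12}(1),E_{21}(b_ka_k)]\in\mfsl_{m|n}(A)$ (which needs only $m+n\ge 2$), I would conclude $E_{11}(y)\in\mfsl_{m|n}(A)$ and hence $X\in\mfsl_{m|n}(A)$.

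The only real obstacle is the sign bookkeeping forced by the non-standard supertrace $\mathrm{str}(E_{ij}(a))=\delta_{ij}(-1)^{|i|(|i|+|a|)}a$: I need to verify that $\mathrm{str}([E_{ij}(a),E_{ji}(b)])$ genuinely collapses to $\pm[a,b]$ and --- the key point for the second half --- that every reduction relation $E_{ii}(a)\equiv(-1)^{|i|(1+|a|)}E_{11}(a)\pmod{\mfsl_{m|n}(A)}$ has supertrace exactly $0$, so that these reductions are compatible with the $K$-linear map $X\mapsto\mathrm{str}(X)+[A,A]$ whose kernel is being described. Both reduce to the parity identities above and present no conceptual difficulty, but they are where all the content of the lemma lies; the role of the hypothesis $m\ge 1$ is precisely to provide an index $1$ with $|1|=0$, onto which every diagonal entry can be funnelled so that the supertrace reads off cleanly as the single entry at position $(1,1)$.
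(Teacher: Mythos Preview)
Your proposal is correct and follows essentially the same approach as the paper: compute $\mathrm{str}([E_{ij}(a),E_{kl}(b)])$ directly to get the inclusion, then (using $|1|=0$) reduce a diagonal matrix to $E_{11}(y)$ via brackets of the form $[E_{i1}(a),E_{1i}(1)]$ (the paper uses $[E_{1i}(a_i),E_{i1}(1)]$, which differs only by a sign) and conclude from $E_{11}([A,A])\subset\mfsl_{m|n}(A)$. The only cosmetic difference is that the paper observes $E_{11}([A,A])=[E_{11}(A),E_{11}(A)]\subset[\mfgl_{m|n}(A),\mfgl_{m|n}(A)]$ directly, whereas you invoke the explicit $E_{12},E_{21}$ identity from the previous lemma's proof.
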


\begin{proof} Denote $\overline{\mfsl}_{m|n}(A) = \{X \in \mfgl_{m|n}(A) \,|\, \mathrm{str}(X) \in [A,A]\}$.
For any $1 \leq i,j,k,l \leq m+n$ and homogeneous elements $a, b \in A$, we have
\begin{equation*}
\begin{split}
\str\big([E_{ij}(a), E_{kl}(b)]\big) = & \str\Big(\delta_{jk}E_{il}(ab)
- (-1)^{(|i|+|j|+|a|)(|k|+|l|+|b|)}\delta_{il}E_{kj}(ba)\Big)  \\
= & \delta_{il}\delta_{jk} \Big( (-1)^{|i|(|i|+|a|+|b|)}ab -
(-1)^{(|i|+|j|+|a|)(|j|+|i|+|b|)}(-1)^{|j|(|j|+|a|+|b|)}ba\Big) \\
= & \delta_{il}\delta_{jk} (-1)^{|i|(|i|+|a|+|b|)}\big(ab -(-1)^{|a| \cdot |b|}ba\big)
\in [A,A].
\end{split}
\end{equation*}
Thus $\mfsl_{m|n}(A) \subseteq \overline{\mfsl}_{m|n}(A)$. In addition, if $m\geq 1$, let
$X = \sum\limits_{i=1}^{m+n} E_{ii}(a_i) \in \overline{\mfsl}_{m|n}(A)$, where $a_i$'s are homogeneous
elements in $A$. Then $\str(X) = \sum\limits_{i=1}^{m+n} (-1)^{|i|(|i|+|a_i|)}a_i \in [A,A]$.
Moreover, we have
\begin{equation*}
\begin{split}
& X+ \sum_{i=2}^{m+n}(-1)^{|i|(|i|+|a_i|)}[E_{1i}(a_i),E_{i1}(1)] =
\sum\limits_{i=1}^{m+n} E_{ii}(a_i) + \sum_{i=2}^{m+n}
\Big((-1)^{|i|(|i|+|a_i|)}E_{11}(a_i) - E_{ii}(a_i) \Big) \\
= & E_{11}(\sum\limits_{i=1}^{m+n} (-1)^{|i|(|i|+|a_i|)}a_i) \in E_{11}([A,A])
= [E_{11}(A),E_{11}(A)] \subseteq [\mfgl_{m|n}(A),\mfgl_{m|n}(A)]=\mfsl_{m|n}(A).
\end{split}
\end{equation*}
Thus $X\in \mfsl_{m|n}(A)$ and $\overline{\mfsl}_{m|n}(A) \subseteq \mfsl_{m|n}(A)$. Hence $\mfsl_{m|n}(A)=\overline{\mfsl}_{m|n}(A)$.
\end{proof}

\begin{lem} The Lie superalgebra $\mfsl_{m|n}(A)$ is perfect, i.e.,
$[\mfsl_{m|n}(A),\mfsl_{m|n}(A)]=\mfsl_{m|n}(A)$.
\end{lem}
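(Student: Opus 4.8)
The plan is to prove perfectness by showing that every generator $E_{ij}(a)$ (with $i\neq j$, $a\in A$) of $\mfsl_{m|n}(A)$ already lies in the derived subsuperalgebra $[\mfsl_{m|n}(A),\mfsl_{m|n}(A)]$. Since $m+n\geq 3$, given distinct indices $i,j$ we may choose a third index $k\notin\{i,j\}$. Then both $E_{ik}(a)$ and $E_{kj}(1)$ are off-diagonal matrix units, hence generators of $\mfsl_{m|n}(A)$, and by the commutation relation recorded just before Lemma~3.1 — this is precisely the identity $E_{ij}(a)=[E_{ik}(a),E_{kj}(1)]$ already used in the proof of Lemma~3.1, where the second term of the relation vanishes because $j\neq i$ — we obtain $E_{ij}(a)=[E_{ik}(a),E_{kj}(1)]\in[\mfsl_{m|n}(A),\mfsl_{m|n}(A)]$.

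Next I would observe that $[\mfsl_{m|n}(A),\mfsl_{m|n}(A)]$ is a subsuperalgebra of $\mfsl_{m|n}(A)$; indeed it is an ideal, since the bracket of an element of $[\mfsl_{m|n}(A),\mfsl_{m|n}(A)]$ with any element of $\mfsl_{m|n}(A)$ is again a bracket of elements of $\mfsl_{m|n}(A)$ and thus lies in $[\mfsl_{m|n}(A),\mfsl_{m|n}(A)]$. Because $\mfsl_{m|n}(A)$ is by definition the Lie superalgebra generated by the elements $E_{ij}(a)$ with $i\neq j$, and we have just shown all of these lie in the subsuperalgebra $[\mfsl_{m|n}(A),\mfsl_{m|n}(A)]$, it follows that $[\mfsl_{m|n}(A),\mfsl_{m|n}(A)]\supseteq\mfsl_{m|n}(A)$. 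The reverse inclusion is automatic, so equality holds and $\mfsl_{m|n}(A)$ is perfect.

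There is essentially no serious obstacle here: the argument is a one-line reduction once the hypothesis $m+n\geq 3$ is invoked to supply a third index, mirroring the classical proof that $\mfsl_n$ is perfect. The only point deserving a moment's attention is the sign/parity bookkeeping in the bracket formula, to confirm that the unwanted term $\delta_{li}E_{kj}(ba)$ (here with $l=j\neq i$) genuinely drops out irrespective of the degree prefactor. One could alternatively argue via Lemma~3.1, treating off-diagonal entries as above and diagonal entries through the same commutator identities appearing in the proofs of Lemmas~3.1 and~3.2, but the generator argument is the most economical and is the one I would write up.
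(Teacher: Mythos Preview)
Your proof is correct and follows exactly the paper's approach: the paper's entire argument is the single observation that $E_{ij}(a)=[E_{ik}(a),E_{kj}(1)]$ for distinct $i,j,k$, which is precisely what you use. Your additional paragraph explaining why this suffices (that the derived subalgebra is a subsuperalgebra containing all generators) merely spells out what the paper leaves implicit.
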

\begin{proof} The perfectness follows from
$E_{ij}(a) = [E_{ik}(a),E_{kj}(1)]$ for distinct $i,j,k$ and any $a \in A$.
\end{proof}

The perfectness of $\mfsl_{m|n}(A)$ guarantees that the universal central extension of $\mfsl_{m|n}(A)$ exists.

\section{Central extensions of $\mfsl_{m|n}(A)$}

To study central extensions of $\mfsl_{m|n}(A)$, we define the Steinberg Lie superalgebra $\mfst_{m|n}(A)$ as follows by generators and relations.

\begin{defn}
For $m+n \geq 3$, the Steinberg Lie superalgebra $\mfst_{m|n}(A)$ is defined
to be the Lie superalgebra over $K$ generated by the homogeneous
elements $F_{ij}(a)$, $a \in A$ homogeneous, $1\leq i \neq j\leq m+n$
and $\deg (F_{ij}(a))=|i|+|j|+|a|$, subject to the following relations for $a,b
\in A$:
\begin{align}
&a \mapsto F_{ij}(a) \text{ is a $K$-linear map,} \label{st1}\\
&[F_{ij}(a), F_{jk}(b)] = F_{ik}(ab), \text{ for distinct } i, j, k,
\label{st2} \\
&[F_{ij}(a), F_{kl}(b)] = 0, \text{ for } i\neq j\neq k \neq l \neq i.
\label{st3}
\end{align}
\end{defn}

Let $\mcN^+$ and $\mcN^-$ be the $K$-submodules of
$\mfst_{m|n}(A)$ generated by $F_{ij}(a)$ for $1\le i < j \le m+n$ and $F_{ij}(a)$ for
$1\le j < i \le m+n$, respectively. It is clear that $\mcN^+$ and $\mcN^-$ are subsuperalgebras of $\mfst_{m|n}(A)$. Let $\mcH$ be the $K$-submodule of $\mfst_{m|n}(A)$ generated by $H_{ij}(a,b):=[F_{ij}(a), F_{ji}(b)]$ for $1\le i \neq j \le m+n$. Clearly, $H_{ij}(a,b)= -(-1)^{(|i|+|j|+|a|)(|i|+|j|+|b|)}H_{ji}(b,a)$. The following lemma implies that $\mcH$ is indeed a subsuperalgebra of $\mfst_{m|n}(A)$.

\begin{lem}\label{HN} For distinct $i,j,k$, we have the following identities.
\begin{enumerate}
\item
$[H_{ij}(a, b), F_{ik}(c)]= F_{ik}(abc)$.

\item
$[H_{ij}(a, b), F_{ki}(c)] =  -(-1)^{(|a|+|b|)(|i|+|k|+|c|)}F_{ki}(cab)$. \label{HF1}

\item
$[H_{ij}(a, b), F_{ij}(c)]= F_{ij}\Big(abc +(-1)^{(|i|+|j|+|a||b|+|b||c|+|c||a|)}cba\Big)$.\label{HF2}
\end{enumerate}
\end{lem}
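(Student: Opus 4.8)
The plan is to verify each identity by expanding $H_{ij}(a,b) = [F_{ij}(a),F_{ji}(b)]$ and applying the defining relations \eqref{st2} and \eqref{st3}, together with the super-Jacobi identity, being careful with signs. For part (1), with $i,j,k$ distinct, I would write $[H_{ij}(a,b),F_{ik}(c)] = [[F_{ij}(a),F_{ji}(b)],F_{ik}(c)]$ and apply the super-Jacobi identity to move the bracket inward: this gives $[F_{ij}(a),[F_{ji}(b),F_{ik}(c)]] - (-1)^{(\cdots)}[F_{ji}(b),[F_{ij}(a),F_{ik}(c)]]$. The second inner bracket $[F_{ij}(a),F_{ik}(c)]$ vanishes by \eqref{st3} (indices $j,i,i,k$ — actually here $i$ repeats, so one must instead note it vanishes because the product of two elementary matrices $E_{ij}E_{ik}$ is zero; in $\mfst$ this is the relation $[F_{ij}(a),F_{ik}(c)]=0$ which follows since these sit in $\mcN^\pm$ and one can derive it — more carefully, $[F_{ij}(a),F_{ik}(c)]=0$ for $j\neq k$ both $\neq i$ is a consequence of \eqref{st2}–\eqref{st3}, e.g. pick $l\notin\{i,j,k\}$ when $m+n\geq 4$, or argue directly when $m+n=3$). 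The surviving term is $[F_{ij}(a),[F_{ji}(b),F_{ik}(c)]] = [F_{ij}(a),F_{jk}(bc)] = F_{ik}(abc)$ by two applications of \eqref{st2}, and the sign works out to $+1$ because $F_{ij}(a)$ and $F_{ik}(c)$ super-commute so the reordering introduced no net sign. Part (2) is entirely analogous, applying super-Jacobi to $[[F_{ij}(a),F_{ji}(b)],F_{ki}(c)]$; now the relevant nonzero inner bracket is $[F_{ki}(c),F_{ij}(a)] = F_{kj}(ca)$, then bracketing with $F_{ji}(b)$ gives $F_{ki}(cab)$ up to the sign $-(-1)^{(|a|+|b|)(|i|+|k|+|c|)}$ coming from pulling $H_{ij}(a,b)$ (degree $|a|+|b|$) past $F_{ki}(c)$.

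Part (3) is the most delicate, since both terms $abc$ and $cba$ survive; here I would use that $[H_{ij}(a,b),F_{ij}(c)] = [[F_{ij}(a),F_{ji}(b)],F_{ij}(c)]$ and expand by super-Jacobi. One inner bracket $[F_{ij}(a),F_{ij}(c)]$ vanishes (as in part (1), or trivially since $i\neq j$ and $E_{ij}E_{ij}=0$), while $[F_{ji}(b),F_{ij}(c)] = -H_{ij}(c,b)$... this reduces part (3) to a computation with two $H$'s, which is circular. Instead I would route through an auxiliary index: pick $k\notin\{i,j\}$ (possible since $m+n\geq 3$) and write $F_{ij}(c) = [F_{ik}(c),F_{kj}(1)]$, then compute $[H_{ij}(a,b),[F_{ik}(c),F_{kj}(1)]]$ by super-Jacobi, reducing each piece to parts (1) and (2) already proved. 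Specifically $[H_{ij}(a,b),F_{ik}(c)] = F_{ik}(abc)$ by (1), and $[H_{ij}(a,b),F_{kj}(1)]$ — note $H_{ij}(a,b) = -(-1)^{(\cdots)}H_{ji}(b,a)$, so this is $[H_{ji}(b,a),F_{kj}(1)]$ up to sign, which is of the form handled by (2) with the roles of the indices permuted, giving a term with $ba$ on the appropriate side. Reassembling, $[H_{ij}(a,b),F_{ij}(c)] = [[H_{ij}(a,b),F_{ik}(c)],F_{kj}(1)] + (-1)^{(|a|+|b|)(|i|+|k|+|c|)}[F_{ik}(c),[H_{ij}(a,b),F_{kj}(1)]]$, and both brackets evaluate via \eqref{st2} to $F_{ij}$ applied to $abc$ and (a sign times) $cba$ respectively.

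The main obstacle I anticipate is bookkeeping the signs in part (3): tracking the parity $|H_{ij}(a,b)| = |a|+|b|$ through the super-Jacobi identity and through the re-expression $F_{ij}(c)=[F_{ik}(c),F_{kj}(1)]$, and then matching the accumulated sign on the $cba$ term against the stated exponent $(-1)^{(|i|+|j|+|a||b|+|b||c|+|c||a|)}$. I would organize this by computing each sub-bracket's parity explicitly, substituting $|1|=0$, and simplifying modulo $2$, using freely that $|i|,|j|,|k|\in\{0,1\}$ and that cross-terms like $|k|^2 \equiv |k|$. A secondary point worth a remark is justifying $[F_{ij}(a),F_{ik}(c)]=0$ for distinct $i,j,k$ (and its transpose), which is needed in parts (1)–(2); when $m+n\geq 4$ this is immediate from \eqref{st3} via a fourth index, and when $m+n=3$ it follows from writing one factor through the third index and applying \eqref{st2}–\eqref{st3}.
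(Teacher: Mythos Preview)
Your plan is correct and matches the paper's proof essentially step for step: super-Jacobi for (1) and (2), then for (3) route $F_{ij}(c)$ through an auxiliary index $k$ via $F_{ij}(c)=[F_{ik}(c),F_{kj}(1)]$ and apply (1) and (2). One point you are over-worrying: relation \eqref{st3} requires only the chain of inequalities $i\neq j\neq k\neq l\neq i$, not that all four indices be distinct, so $[F_{ij}(a),F_{ik}(c)]=0$ and $[F_{ji}(b),F_{ki}(c)]=0$ are immediate from \eqref{st3} (with the second pair of indices playing the role of $(k,l)$), and no fourth index or separate $m+n=3$ argument is needed.
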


\begin{proof} Since $H_{ij}(a,b)=[F_{ij}(a), F_{ji}(b)]$, by the super Jacobi identity we have
\begin{equation*}
\begin{split}
[H_{ij}(a, b), F_{ik}(c)]= & [[F_{ij}(a),F_{ji}(b)], F_{ik}(c)]
= [F_{ij}(a),[F_{ji}(b), F_{ik}(c)]] +0 \\
= & [F_{ij}(a),F_{jk}(bc)] = F_{ik}(abc), \text{ and}
\end{split}
\end{equation*}
\begin{equation*}
\begin{split}
[H_{ij}(a, b), F_{ki}(c)]= & [[F_{ij}(a),F_{ji}(b)], F_{ki}(c)]
= (-1)^{(|i|+|j|+|b|)(|i|+|k|+|c|)}[[F_{ij}(a), F_{ki}(c)],F_{ji}(b)] + 0\\
= & -(-1)^{(|i|+|j|+|b|)(|i|+|k|+|c|)+(|i|+|j|+|a|)(|i|+|k|+|c|)}[F_{kj}(ca),F_{ji}(b)] \\
= & -(-1)^{(|a|+|b|)(|i|+|k|+|c|)}F_{ki}(cab).
\end{split}
\end{equation*}
Apply the first two identities, and notice that $\deg(H_{ij}(a,b))=|a|+|b|$, we have
\begin{equation*}
\begin{split}
[H_{ij}(a, b),F_{ij}(c)]= & [H_{ij}(a, b), [F_{ik}(c),F_{kj}(1)]] \\
= & [[H_{ij}(a, b), F_{ik}(c)],F_{kj}(1)] + (-1)^{(|a|+|b|)(|i|+|k|+|c|)}[F_{ik}(c),[H_{ij}(a, b),F_{kj}(1)]] \\
= & [F_{ik}(abc),F_{kj}(1)] -(-1)^{(|i|+|j|+|a||b|)+(|a|+|b|)(|j|+|k|+|c|)}[F_{ik}(c),[H_{ji}(b,a),F_{kj}(1)]] \\
= & F_{ij}(abc) +(-1)^{(|a|+|b|)(|j|+|k|)}(-1)^{(|i|+|j|+|a||b|)+(|a|+|b|)(|j|+|k|+|c|)}[F_{ik}(c),F_{kj}(ba)] \\
= & F_{ij}(abc +(-1)^{(|i|+|j|+|a||b|+|b||c|+|c||a|)}cba). \qedhere
\end{split}
\end{equation*}
\end{proof}

From the above lemma we have $[\mcH,\mcN^\pm] \subseteq \mcN^\pm$, thus $\mcN^+ + \mcH + \mcN^-$ is an ideal of $\mfst_{m|n}(A)$ and contains all generators, hence
$\mfst_{m|n}(A) = \mcN^+ + \mcH + \mcN^-$.
Let $\varphi$ be the canonical Lie superalgebras epimorphism
$$\varphi:  \mfst_{m|n}(A)  \to \mfsl_{m|n}(A), \ \quad F_{ij}(a) \mapsto E_{ij}(a).$$ Then $\varphi|_{\mcN^+}$ and $\varphi|_{\mcN^-}$ are injective since $\mcN^+ = \oplus_{1\le i < j \le m+n}F_{ij}(A)$ and
$\mcN^- = \oplus_{1\le j < i \le m+n}F_{ij}(A)$, where $F_{ij}(A)$ is the $K$-submodule $\{F_{ij}(a)\,|\, a\in A\}$. We claim that the sum $ \mcN^+ + \mcH + \mcN^-$ is a direct sum. Indeed, if $n_1+h+n_2=0$ for $n_1 \in \mcN^+$,
$h \in \mcH$ and $n_2 \in \mcN^-$, then $0 = \varphi(n_1+h+n_2) =\varphi(n_1)+ \varphi(h)+\varphi(n_2)$.
Since $\varphi(n_1)$ is the uppertriangular part of $\varphi(n_1+h+n_2)$ in $\mfsl_{m|n}(A)$ , thus $\varphi(n_1)=0$.
The injectivity of $\varphi|_{\mcN^+}$ implies $n_1=0$.
Similarly $n_2=0$, and hence $h=0$. Thus we have the direct sum $\mfst_{m|n}(A) =\mcN^+ \oplus \mcH \oplus \mcN^-$,
in other word, we have
\begin{equation} \label{direct}
\mfst_{m|n}(A) = \mcH \bigoplus \bigg(\sum\limits_{1 \le i \neq j \le m+n}F_{ij}(A)\bigg).
\end{equation}

\begin{prop}\label{inH}
For $m+n \geq 3$, $(\mfst_{m|n}(A),
\varphi)$ is a central extension of
$\mfsl_{m|n}(A)$.
\end{prop}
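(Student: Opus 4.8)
The plan is to show two things: first, that $\varphi$ is surjective, and second, that $\Ker(\varphi)$ is central in $\mfst_{m|n}(A)$.

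Surjectivity is immediate: by definition $\mfsl_{m|n}(A)$ is generated by the $E_{ij}(a)$ with $i\neq j$, and these are exactly the images $\varphi(F_{ij}(a))$, so $\varphi$ is onto.

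For centrality, write $\mfz = \Ker(\varphi)$. Using the direct sum decomposition \eqref{direct}, namely $\mfst_{m|n}(A) = \mcH \oplus \bigl(\sum_{i\neq j} F_{ij}(A)\bigr)$, together with the fact that $\varphi|_{\mcN^+}$ and $\varphi|_{\mcN^-}$ are injective, I claim $\mfz \subseteq \mcH$. Indeed, take $x \in \mfz$ and write $x = h + n_1 + n_2$ with $h \in \mcH$, $n_1 \in \mcN^+$, $n_2 \in \mcN^-$. Applying $\varphi$ and comparing the strictly upper-triangular part gives $\varphi(n_1)=0$, hence $n_1 = 0$ by injectivity of $\varphi|_{\mcN^+}$; similarly $n_2 = 0$. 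So $x = h \in \mcH$, i.e. $\mfz \subseteq \mcH$. It remains to check $[\mfz, \mfst_{m|n}(A)] = 0$, and since $\mfst_{m|n}(A)$ is generated by the $F_{ij}(a)$, it suffices to show $[x, F_{ij}(a)] = 0$ for all such generators, for every $x \in \mfz \subseteq \mcH$. Fix $i \neq j$. Choose $k$ distinct from both $i$ and $j$ (possible since $m+n \geq 3$), and write $F_{ij}(a) = [F_{ik}(a), F_{kj}(1)]$ by \eqref{st2}. Then by the super Jacobi identity
\begin{equation*}
[x, F_{ij}(a)] = [x, [F_{ik}(a), F_{kj}(1)]] = [[x, F_{ik}(a)], F_{kj}(1)] + (-1)^{|x|(|i|+|k|+|a|)}[F_{ik}(a), [x, F_{kj}(1)]].
\end{equation*}
Now $[x, F_{ik}(a)]$ and $[x, F_{kj}(1)]$ each lie in the kernel $\mfz$: indeed $\varphi$ is a Lie superalgebra homomorphism and $\varphi(x) = 0$, so $\varphi([x, F_{ik}(a)]) = [\varphi(x), E_{ik}(a)] = 0$, and likewise for the other bracket. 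By Lemma~\ref{HN}, brackets of elements of $\mcH$ with $F_{ik}(a)$ stay inside the span of the $F_{pq}$'s, so $[x, F_{ik}(a)] \in \mfz$ actually lies in $\mcN^+ \oplus \mcN^-$ (it has no $\mcH$-component in the adjustment — more precisely, since $x \in \mcH$, Lemma~\ref{HN} shows $[\mcH, \mcN^{\pm}] \subseteq \mcN^{\pm}$, so $[x, F_{ik}(a)] \in \mcN^{\pm}$ according to whether $F_{ik}$ is upper or lower). But $\mfz \cap (\mcN^+ \oplus \mcN^-) = 0$ by the injectivity of $\varphi$ restricted to those pieces. Hence $[x, F_{ik}(a)] = 0$, and identically $[x, F_{kj}(1)] = 0$, so $[x, F_{ij}(a)] = 0$.

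I expect the main (admittedly modest) obstacle to be the bookkeeping in the second step: one must be careful that for a given generator $F_{ij}(a)$ the auxiliary index $k$ can always be chosen (this uses $m+n \geq 3$), and that when $x \in \mcH$ is decomposed into the $H_{pq}(a,b)$ generators, Lemma~\ref{HN} genuinely forces $[x, F_{ik}(a)]$ into $\mcN^+$ or $\mcN^-$ with no surviving diagonal ($\mcH$) part — this is exactly what parts (1) and (2) of Lemma~\ref{HN} give, since those brackets produce only off-diagonal $F$'s. Combining this with $\mfz \subseteq \mcH$ and $\mfz \cap (\mcN^+ \oplus \mcN^-) = 0$ closes the argument, so that $[\mfz, \mfst_{m|n}(A)] = 0$ and $(\mfst_{m|n}(A), \varphi)$ is a central extension of $\mfsl_{m|n}(A)$.
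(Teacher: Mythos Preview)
Your proof is correct and follows essentially the same approach as the paper: show $\Ker(\varphi)\subseteq\mcH$ via the triangular decomposition, then use $[\mcH,\mcN^\pm]\subseteq\mcN^\pm$ (from Lemma~\ref{HN}) together with injectivity of $\varphi|_{\mcN^\pm}$ to conclude $[\mfz,F_{ij}(a)]=0$. Note that your Jacobi detour through $F_{ij}(a)=[F_{ik}(a),F_{kj}(1)]$ is unnecessary --- the direct argument you give for $[x,F_{ik}(a)]\in\mfz\cap(\mcN^+\oplus\mcN^-)=0$ already applies verbatim to $[x,F_{ij}(a)]$ itself, which is exactly how the paper argues.
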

\begin{proof}
Since $\mfst_{m|n}(A) =\mcN^+ \oplus \mcH \oplus \mcN^-$, the injectivities of
$\varphi|_{\mcN^+}$ and $\varphi|_{\mcN^-}$ implies $\Ker (\varphi) \subseteq \mcH$. By Lemma \ref{HN}, we have
$[\mcH,\mcN^+ \oplus \mcN^-] \subseteq \mcN^+ \oplus \mcN^-$. For $t \in \Ker (\varphi)$, $F_{ij}(a)\in \mfst_{m|n}(A)$,
we have $[t, F_{ij}(a)] = n_1+n_2$ for some $n_1 \in \mcN^+$ and $n_2 \in \mcN^-$. This implies that
\begin{equation*}
\varphi(n_1+n_2) = \varphi\Big([t,F_{ij}(a)]\Big) = [\varphi(t),\varphi(F_{ij}(a))] =0,
\end{equation*}
hence $n_1=n_2=0$. Thus $\Ker (\varphi)$ is in the center
of the Lie superalgebra $\mfst_{m|n}(A)$.
\end{proof}

\section{Cyclic homology of $\Z/2\Z$-graded algebras}

In this section, we will recall the cyclic homology of $\Z/2\Z$-graded algebras and relate it to the kernel of the central extension $(\mfst_{m|n}(A), \varphi)$ of
the Lie superalgebra $\mfsl_{m|n}(A)$.

Cyclic homology of $\Z/2\Z$-graded algebras was studied in \cite{Ka2} (see also \cite{IoKo2}). Define the chain complex of $K$-modules $C_*(A)$ by letting
$C_0(A)=A$ and, for $n \geq 1$, let the module $C_n(A)$ be the quotient of
the $K$-module $A^{\otimes (n+1)}$ by the $K$-submodule
$I_n$ generated by the elements
$$
a_0 \otimes a_1 \otimes \cdots \otimes a_n - (-1)^{n+|a_n|
\sum\limits_{i=0}^{n-1}|a_i|} a_n \otimes a_0 \otimes a_1 \otimes
\cdots \otimes a_{n-1},
$$
with homogeneous elements $a_i \in A$ for $0 \leq i \leq n$. The homomorphism
$\widetilde{d}_n: A^{\otimes (n+1)} \rightarrow A^{\otimes n}$ is
given by
\begin{eqnarray*}
\widetilde{d}_n(a_0 \otimes a_1 \otimes \cdots \otimes a_n) &=&
\sum_{i=0}^{n-1} (-1)^i a_0 \otimes \cdots \ot a_i a_{i+1} \otimes
\cdots \otimes a_n\\
&&+ (-1)^{n+|a_n| \sum\limits_{i=0}^{n-1}|a_i|} a_n
a_0 \otimes a_1 \otimes \cdots \otimes a_{n-1}.   \label{tchg1}
\end{eqnarray*}
One can check $\widetilde{d}_n (I_n) \subseteq I_{n-1}$, hence
it induces a homomorphism $d_n:C_n(A) \rightarrow C_{n-1}(A)$ and
we have $d_{n-1}d_{n}=0$. The $n^{\mathrm{th}}$ $\Z/2\Z$-graded cyclic
homology group $\HC_n(A)$ of the superalgebra $A$ is defined as
$\HC_n(A)=\Ker(d_n)/\Ima(d_{n+1})$.

We will see that the first cyclic homology group $\HC_1(A)$ is closely related to $\Ker(\varphi)$. The $K$-module $\HC_1(A)$ can be written down explicitly as following.
Let $$
      \ll A , A\gg \; = (A\ot_K A) / \mathcal{I},
$$ where $\mathcal{I}$ is the $K$-span of all elements of type
\begin{align*}
  a &\otimes b+ (-1)^{|a| |b|}  b\otimes a,  \\
 (-1)^{|a||c|} a &\otimes bc +(-1)^{|b||a|} b\otimes
          ca +(-1)^{|c||b|} c\otimes ab
\end{align*}
for $a,b,c\in A$. We abbreviate $\ll a,b \gg \, = a\ot b +
\mathcal{I}$. Observe that there is a well-defined commutator map
$$
 d: \ \ll A, A\gg \; \to A, \quad \ll a,b\gg \,\mapsto [a,b].
$$
Then
$$
  \HC_1(A)  = \{ \textstyle \sum_i \ll a_i, b_i \gg
   \,| \,  \sum_i [a_i, b_i] = 0 \}$$

We have shown that $\mfst_{m|n}(A) =\mcN^+ \oplus \mcH \oplus \mcN^-$. The injectivities of
$\varphi|_{\mcN^+}$ and $\varphi|_{\mcN^-}$ implies $\Ker (\varphi) \subseteq \mcH$. For the rest of this section we will assume $m\geq 1$. Consider the elements $$h(a,b):= H_{1j}(a,b)-(-1)^{|a||b|}H_{1j}(1,ba)$$ in $\mcH$. The following lemma shows that the definition of $h(a,b)$ does
not depend on $j$, for $j\neq 1$.

\begin{lem}\label{Hidentities}
For homogeneous elements $a,b,c \in A$, we have the following identities.

(1). $H_{ij}(ab,c) = H_{ik}(a,bc) - (-1)^{(|i|+|j|+|a|+|b|)(|i|+|j|+|c|)}H_{jk}(ca,b)$ for distinct $i,j,k$.

(2). $H_{1j}(a,b)-(-1)^{|a||b|}H_{1j}(1,ba)= H_{1k}(a,b)-(-1)^{|a||b|} H_{1k}(1,ba)$ for any $j,k\neq 1$.

(3). $H_{ij}(1,a)=-(-1)^{(|i|+|j|+|a|)(|i|+|j|)}H_{ji}(1,a)=H_{ij}(a,1)= -(-1)^{(|i|+|j|+|a|)(|i|+|j|)}H_{ji}(a,1)$.
\end{lem}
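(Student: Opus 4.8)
The three identities are all consequences of the super Jacobi identity together with the defining relations \eqref{st2}--\eqref{st3} of $\mfst_{m|n}(A)$ and the basic bracket formulas from Lemma~\ref{HN}, so the plan is simply to expand each $H$ in terms of the generators $F_{ij}$ and push brackets around carefully, keeping track of signs via the degree function $\deg(F_{ij}(a))=|i|+|j|+|a|$ and $\deg(H_{ij}(a,b))=|a|+|b|$. For part~(1), I would start from $H_{ij}(ab,c)=[F_{ij}(ab),F_{ji}(c)]$ and rewrite $F_{ij}(ab)=[F_{ik}(a),F_{kj}(b)]$ (valid by \eqref{st2} for distinct $i,j,k$); then expand $[[F_{ik}(a),F_{kj}(b)],F_{ji}(c)]$ by the super Jacobi identity. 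One term gives $[F_{ik}(a),[F_{kj}(b),F_{ji}(c)]]=[F_{ik}(a),F_{ki}(bc)]=H_{ik}(a,bc)$ up to a sign from reordering $[F_{ik}(a),F_{ki}(bc)]$ into the standard form $H_{ik}$; the other term gives $\pm[[F_{ik}(a),F_{ji}(c)],F_{kj}(b)]$, and since $[F_{ik}(a),F_{ji}(c)]=0$ when $i,j,k$ are distinct by \eqref{st3}\dots wait—one must instead route through $H_{jk}$: the cleaner move is to use the Jacobi identity grouping so that the surviving term is $[F_{jk}(ca),F_{kj}(b)]=-(-1)^{\ast}H_{jk}(ca,b)$, after re-expressing $[F_{ik}(a),F_{ji}(c)]$ appropriately. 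The sign $(-1)^{(|i|+|j|+|a|+|b|)(|i|+|j|+|c|)}$ in the statement is exactly what the bookkeeping of the two transpositions in the super Jacobi identity produces, so the task is to verify it matches.

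For part~(2), I would apply part~(1) twice. Setting $i=1$ and using part~(1) with the factorization $a=a\cdot 1$ (or by a direct Jacobi computation) expresses $H_{1j}(a,b)$ in a form where the $j$-dependence is isolated, and then subtracting $(-1)^{|a||b|}H_{1j}(1,ba)$ cancels the $j$-dependent piece. Concretely, from part~(1) with $(ab,c)\rightsquigarrow$ appropriate substitutions one gets a relation of the shape $H_{1j}(a,b) - H_{1k}(a,b) = (\text{something symmetric involving } H_{jk})$, and the same relation with $(a,b)$ replaced by $(1,ba)$ and multiplied by $(-1)^{|a||b|}$ produces the identical correction term, so the difference $h(a,b)$ is independent of $j$. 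The main thing to check is that the ``correction'' terms $H_{jk}(ca,b)$-type pieces genuinely coincide after the substitution $a\mapsto 1$, $b\mapsto ba$; this should follow because $H_{jk}(c\cdot 1, ba)$ and $H_{jk}(ca,b)$ are related by part~(1) or by Lemma~\ref{HN}\eqref{HF2} applied in $\mcN^{\pm}$.

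For part~(3), the identities $H_{ij}(1,a)=-(-1)^{(|i|+|j|+|a|)(|i|+|j|)}H_{ji}(1,a)$ and the analogue for $H_{ij}(a,1)$ are instances of the general skew relation $H_{ij}(a,b)=-(-1)^{(|i|+|j|+|a|)(|i|+|j|+|b|)}H_{ji}(b,a)$ recorded just before Lemma~\ref{HN}, specialized to $b=1$ or $a=1$ (note $|1|=0$, which collapses the sign exponent to $(|i|+|j|+|a|)(|i|+|j|)$). The remaining content is the middle equality $H_{ij}(1,a)=H_{ij}(a,1)$: I would prove this by picking a third index $k$ and writing $H_{ij}(1,a)=[F_{ij}(1),F_{ji}(a)]$, then using $F_{ji}(a)=[F_{jk}(a),F_{ki}(1)]$ and the Jacobi identity to slide the $a$ from the second slot to the first, comparing with $H_{ij}(a,1)=[F_{ij}(a),F_{ji}(1)]$ expanded via $F_{ij}(a)=[F_{ik}(a),F_{kj}(1)]$. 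Alternatively, part~(1) with suitable unit insertions ($a=1$ or $c=1$, exploiting $b=b\cdot 1 = 1\cdot b$) gives it directly.

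The main obstacle throughout is sign bookkeeping: every use of the super Jacobi identity introduces factors like $(-1)^{|x||y|}$ that must be tracked against the specific degrees $|i|+|j|+|a|$, and the final signs in (1) and (3) only come out correctly if one is consistent about which ordering of $H_{ij}(\cdot,\cdot)$ is ``standard'' and about the degree of intermediate expressions such as $[F_{ik}(a),F_{ji}(c)]$ (which vanishes for distinct indices by \eqref{st3}, a fact used to kill spurious terms). No conceptual difficulty arises beyond this; each identity is a few lines of the same style as the proof of Lemma~\ref{HN}.
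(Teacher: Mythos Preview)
Your plan matches the paper's proof almost exactly: for (1) the paper rewrites $F_{ij}(ab)=[F_{ik}(a),F_{kj}(b)]$ and applies the super Jacobi identity, for (2) it applies (1) to $H_{1j}(a\cdot 1,b)$ and to $H_{1j}(1\cdot 1,ba)$ and observes that the two $H_{jk}$-correction terms cancel, and for (3) it specializes (1) with $a=b=1$ and swaps $j\leftrightarrow k$, exactly as you suggest in your ``alternatively'' clause.

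One point needs correcting, however. You twice assert that $[F_{ik}(a),F_{ji}(c)]=0$ for distinct $i,j,k$ by \eqref{st3}; this is false. Relation \eqref{st3} requires the four indices $i,k,j,i$ to satisfy $i\ne k\ne j\ne i\ne i$, and the last condition fails. In fact $[F_{ji}(c),F_{ik}(a)]=F_{jk}(ca)$ by \eqref{st2}, so $[F_{ik}(a),F_{ji}(c)]=-(-1)^{(|i|+|k|+|a|)(|j|+|i|+|c|)}F_{jk}(ca)$, and it is precisely this nonvanishing that produces the $H_{jk}(ca,b)$ term in (1). You caught this yourself mid-paragraph (``wait---one must instead route through $H_{jk}$''), but then revert to the false claim in your final paragraph; make sure the written proof uses the correct computation throughout. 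With that fixed, your sign bookkeeping will go through and the argument is identical to the paper's.
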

\begin{proof} Since $H_{ij}(ab,c) = [F_{ij}(ab),F_{ji}(c)]$ and $F_{ij}(ab)=[F_{ik}(a),F_{kj}(b)]$ for $k\neq i,j$, we have
\begin{equation*}
\begin{split}
H_{ij}(ab,c) = & [F_{ij}(ab),F_{ji}(c)] = [[F_{ik}(a),F_{kj}(b)],F_{ji}(c)] \\
= & [F_{ik}(a),[F_{kj}(b),F_{ji}(c)]] + (-1)^{(|k|+|j|+|b|)(|i|+|j|+|c|)}[[F_{ik}(a),F_{ji}(c)],F_{kj}(b)] \\
= & [F_{ik}(a),F_{ki}(bc)]] - (-1)^{(|k|+|j|+|b|+|k|+|i|+|a|)(|i|+|j|+|c|)}[[F_{ji}(c),F_{ik}(a)],F_{kj}(b)] \\
= & H_{ik}(a,bc) - (-1)^{(|i|+|j|+|a|+|b|)(|i|+|j|+|c|)}[F_{jk}(ca),F_{kj}(b)] \\
= & H_{ik}(a,bc) - (-1)^{(|i|+|j|+|a|+|b|)(|i|+|j|+|c|)}H_{jk}(ca,b), \text{ and}
\end{split}
\end{equation*}
\begin{equation*}
\begin{split}
H_{1j}(a,b) -  (-1)^{|a||b|} H_{1j}(1,ba) = &H_{1j}(a \cdot 1,b) - (-1)^{|a||b|} H_{1j}(1 \cdot 1,ba) \\
= & H_{1k}(a ,b) - (-1)^{(|1|+|j|+|a|)(|1|+|j|+|b|)}H_{jk}(ba,1)  \\
  \qquad & - (-1)^{|a||b|} \Big(H_{1k}(1 ,ba) - (-1)^{(|1|+|j|)(|1|+|j|+|a|+|b|)}H_{jk}(ba,1)\Big)  \\
= & H_{1k}(a,b)-(-1)^{|a||b|} H_{1k}(1,ba).
\end{split}
\end{equation*}
If we take $a=b=1$ in (1) and switch $j$ and $k$, we can conclude (3).
\end{proof}

The elements $h(a,b)$ in $\mcH$ satisfy nice identities, as shown in the following lemma, which suggests key connections to the elements $\ll a , b\gg$ in $\ll A , A\gg$.

\begin{lem}\label{habc} The elements $h(a,b)$ satisfy the following identities.

(1). $h(a,1)=h(1,b)=0$.

(2). $(-1)^{|a||c|}h(a,bc) + (-1)^{|b||a|} h(b,ca)+(-1)^{|c||b|}h(c,ab)=0$.

(3). $h(a,b) + (-1)^{|a||b|} h(b,a)=0$.

\end{lem}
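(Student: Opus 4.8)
The plan is to prove (1) and (2) directly and then to deduce (3) from them by specialization, so that the real content lies in (2). Statement (1) is immediate: since $|1|=0$, the definition gives $h(1,b)=H_{1j}(1,b)-H_{1j}(1,b)=0$ at once, and $h(a,1)=H_{1j}(a,1)-H_{1j}(1,a)$, which vanishes because $H_{1j}(a,1)=H_{1j}(1,a)$ by Lemma~\ref{Hidentities}(3).

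For (2) I would fix the index $j\neq 1$ used in the definition of $h$ and choose a third index $k$ with $1,j,k$ distinct (possible since $m+n\geq 3$ and $m\geq 1$). Expanding each $h$ via its definition, the left-hand side becomes $S_1+S_2$, where $S_1$ is the sum of the ``diagonal'' terms $(-1)^{|a||c|}H_{1j}(a,bc)+(-1)^{|a||b|}H_{1j}(b,ca)+(-1)^{|b||c|}H_{1j}(c,ab)$ and, after collecting Koszul signs, $S_2=-\big((-1)^{|a||b|}H_{1j}(1,bca)+(-1)^{|b||c|}H_{1j}(1,cab)+(-1)^{|a||c|}H_{1j}(1,abc)\big)$. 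The first step is to use Lemma~\ref{Hidentities}(3) to rewrite every $H_{1j}(1,x)$ in $S_2$ as $H_{1j}(x,1)$. The second step is to apply Lemma~\ref{Hidentities}(1) to each term of $S_1$, splitting the two-fold products $bc,ca,ab$ sitting in the second argument; this expresses $S_1$ as a cyclic sum of $H_{1k}$-terms plus a cyclic sum of $H_{jk}$-corrections. The third step is to apply Lemma~\ref{Hidentities}(1) once more, this time to those $H_{jk}$-corrections and to the three $H_{1j}(\cdot,1)$-terms of $S_2$ (whose arguments $bca,cab,abc$ are three-fold products), and then to verify that the three resulting cyclic sums cancel against one another term by term. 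As a sanity check, applying $\varphi$ gives $\varphi(h(a,b))=E_{11}(ab)-(-1)^{|a||b|}E_{11}(ba)=E_{11}([a,b])$ in $\mfgl_{m|n}(A)$, so the image of the left-hand side of (2) is $E_{11}$ of the identity $(-1)^{|a||c|}[a,bc]+(-1)^{|a||b|}[b,ca]+(-1)^{|b||c|}[c,ab]=0$, which holds in any associative superalgebra; this tells us what the computation in $\mcH$ must produce, though it is not itself a proof since $\Ker\varphi$ is in general nonzero.

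Statement (3) then follows from (1) and (2): setting $c=1$ in (2), so that $|c|=0$, the coefficients become $1,(-1)^{|a||b|},1$ and the identity reads $h(a,b)+(-1)^{|a||b|}h(b,a)+h(1,ab)=0$, and $h(1,ab)=0$ by (1). The main obstacle will be the sign bookkeeping in steps two and three of (2): the iterated applications of Lemma~\ref{Hidentities}(1) have to be arranged so that the cancellation among the $H_{1k}$-, the $H_{jk}$-, and the normalizing $H_{1j}(\cdot,1)$-terms is manifest; apart from that, everything is formal.
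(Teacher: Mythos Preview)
Your treatment of (1) and your derivation of (3) from (1) and (2) match the paper exactly. For (2), however, the paper takes a different and more economical route: rather than expanding the full cyclic sum $S_1+S_2$ and chasing three parallel cancellations, it proves the single auxiliary identity
\[
h(ab,c) \;=\; h(a,bc) \;+\; (-1)^{|a|(|b|+|c|)}\,h(b,ca)
\]
by expanding $H_{1j}(ab,c)$ and $H_{1j}(1\cdot 1,cab)$ via Lemma~\ref{Hidentities}(1) (with the third index $k$) and cancelling the resulting $H_{jk}$-terms. From this one identity, (3) drops out immediately by setting $c=1$, and then (2) follows by combining the auxiliary identity with the just-established antisymmetry (3). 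So the paper's logical order is auxiliary $\Rightarrow$ (3) $\Rightarrow$ (2), rather than your (2) $\Rightarrow$ (3).

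Your direct approach is correct in principle and would succeed, but it triples the workload: each of the three terms of $S_1$ must be expanded via Lemma~\ref{Hidentities}(1), and then the resulting nine pieces have to be matched against the three $H_{1j}(\cdot,1)$-terms of $S_2$ (themselves re-expanded). The paper's auxiliary-identity trick collapses this to a single expansion. What your route buys is conceptual directness---you never introduce an intermediate identity that is not one of the three target statements---whereas the paper's route buys a much shorter computation. Since you yourself flag the sign bookkeeping as ``the main obstacle'' and leave steps two and three unexecuted, it would be worth adopting the paper's shortcut; the full cyclic expansion is feasible but unnecessarily painful.
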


\begin{proof} The identity (1) follows from Lemma \ref{Hidentities} (3). To prove the identity (2), we notice that
\begin{equation*}
\begin{split}
h(ab,c)=& H_{1j}(ab,c) - (-1)^{|c|(|a|+|b|)}H_{1j}(1,cab) \\
= &  H_{1k}(a,bc) - (-1)^{(|j|+|a|+|b|)(|j|+|c|)} H_{jk}(ca,b)  \\
 \qquad & - (-1)^{|c|(|a|+|b|)} \Big( H_{1k}(1,cab) - (-1)^{|j|(|j|+|a|+|b|+|c|)} H_{jk}(cab,1) \Big) \\
= & h(a,bc)  - (-1)^{(|j|+|a|+|b|)(|j|+|c|)} H_{jk}(ca \cdot 1,b) + (-1)^{(|j|+|a|+|b|)(|j|+|c|)} H_{jk}(cab \cdot 1,1)  \\
 \qquad & - (-1)^{|c|(|a|+|b|)} H_{1k}(1,cab) + (-1)^{|a|(|b|+|c|)} H_{1k}(1,bca)  \\
= & h(a,bc)  - (-1)^{(|j|+|a|+|b|)(|j|+|c|)} (-1)^{(|j|+|b|)(|j|+|a|+|c|)}\Big(H_{1k}(1,bca) - H_{1j}(b,ca)\Big) \\
 \quad & + (-1)^{(|j|+|a|+|b|)(|j|+|c|)} (-1)^{|j|(|j|+|a|+|b|+|c|)}\Big(H_{1k}(1,cab ) - H_{1j}(1,cab )\Big) \\
 \qquad & - (-1)^{|c|(|a|+|b|)} H_{1k}(1,cab) + (-1)^{|a|(|b|+|c|)} H_{1k}(1,bca)  \\
= & h(a,bc) + (-1)^{|a|(|b|+|c|)} H_{1j}(b,ca) -(-1)^{|c|(|a|+|b|)} H_{1j}(1,cab )\\
= & h(a,bc) +(-1)^{|a|(|b|+|c|)} h(b,ca).
\end{split}
\end{equation*}
Thus we have $$(-1)^{|a||c|}h(ab,c) = (-1)^{|a||c|}h(a,bc) + (-1)^{|b||a|} h(b,ca).$$ Since $(-1)^{|a||c|}h(ab,c)=-(-1)^{|c||b|}h(c, ab)$, we have the identity (2). If we let $c=1$ in (2) and use the fact $h(1, ab)=0$, we can conclude (3).
\end{proof}

The above lemma guarantees a well-defined $K$-module homomorphism $$\mu:\ \ll A, A\gg \; \to \mfst_{m|n}(A), \, \quad \ll a,b\gg \,\mapsto h(a,b).$$ We claim that $\mu(\HC_1(A))\subset \Ker(\varphi)$. Indeed, if $\sum_{i}\ll a_i,b_i\gg\,\in \HC_1(A)$, then $\sum_i [a_i, b_i]=0$. Since $\mu(\sum_{i}\ll a_i,b_i\gg)=\sum_i h(a_i, b_i)$ and
\begin{eqnarray*}
\varphi\big(\sum_i h(a_i, b_i)\big)&=&\varphi\big(\sum_i H_{1j}(a_i, b_i)-(-1)^{|a_i||b_i|}H_{1j}(1, b_i a_i)\big)\\
&=&\sum_i [E_{1j}(a_i), E_{j1}(b_i)]-(-1)^{|a_i||b_i|}[E_{1j}(1), E_{j1}(b_i a_i)]\\
&=& \sum_i \big(E_{11}(a_i b_i)-(-1)^{(|j|+|a_i|)(|j|+|b_i|)}E_{jj}(b_i a_i)\big)\\
&\quad&-(-1)^{|a_i||b_i|}\big( E_{11}(b_i a_i)-(-1)^{|j|(|j|+|a_i|+|b_i|)}E_{jj}(b_i a_i)\big)\\
&=&\sum_i E_{11}(a_i b_i-(-1)^{|a_i||b_i|}b_i a_i)\\
&=&E_{11} (\sum _i [a_i, b_i])=0.
\end{eqnarray*}
Thus $\mu(\HC_1(A))\subset \Ker(\varphi)$. Therefore the restriction of $\mu$ to $\HC_1(A)$ gives a $K$-module homomorphism from $\HC_1(A)$ to $\Ker (\varphi)$, namely
$$\mu|_{\HC_1(A)}:\ \HC_1(A) \, \to\; \Ker(\varphi), \, \quad \ll a,b\gg \,\mapsto h(a,b).$$
In order to show that $\mu|_{\HC_1(A)}$ is surjective, we will need the following lemma. Recall that $\mcH$ is the $K$-submodule of $\mfst_{m|n}(A)$ generated by $H_{ij}(a,b)$ for $1\le i \neq j \le m+n$.

\begin{lem}\label{elementinH}
Every element $x \in \mcH$ can be written as $x = \sum\limits_{i \in
I_x}h(a_i,b_i)+ \sum\limits_{j=2}^{m+n} H_{1j}(1,c_j)$,
where $a_i,b_i,c_j \in A$ and $I_x$ is a finite index set.
\end{lem}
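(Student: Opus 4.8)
The plan is to show that $\mcH$ is spanned by the elements $h(a,b)$ together with the ``diagonal'' elements $H_{1j}(1,c)$ for $j=2,\dots,m+n$, and that the latter can all be taken with the single index $j=1$ fixed once we pass to the sum. Since $\mcH$ is by definition the $K$-span of all $H_{ij}(a,b)$ with $1\le i\ne j\le m+n$, it suffices to express an arbitrary generator $H_{ij}(a,b)$ in the desired form.

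First I would reduce every $H_{ij}(a,b)$ to the case $i=1$. For $i,j$ both different from $1$, pick $k$ so that $1=k$ is available (i.e., use Lemma~\ref{Hidentities}(1) with the roles chosen so that the third index is $1$): writing $a = a\cdot 1$, we get $H_{ij}(a,b) = H_{ij}(a\cdot 1, b) = H_{i1}(a, 1\cdot b) - (-1)^{(\ast)} H_{j1}(b a, 1)$ for suitable sign $(\ast)$, and by Lemma~\ref{Hidentities}(3) the term $H_{j1}(ba,1)$ equals $\pm H_{1j}(ba,1)$. So modulo signs every $H_{ij}(a,b)$ with $i\ne 1$ is a $K$-combination of terms $H_{1k}(a',b')$ and $H_{1k}(c',1)$. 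By Lemma~\ref{Hidentities}(3) again, $H_{1k}(c',1) = H_{1k}(1,c')$, so it remains to handle $H_{1j}(a,b)$.

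Next, for a general $H_{1j}(a,b)$ with $j\ne 1$, the definition of $h$ gives directly
\begin{equation*}
H_{1j}(a,b) = h(a,b) + (-1)^{|a||b|} H_{1j}(1, ba),
\end{equation*}
which is $h(a,b)$ plus a diagonal term of the allowed shape $H_{1j}(1,c)$ with $c = ba$. Thus every generator of $\mcH$, and hence every $x\in\mcH$, is a finite $K$-combination $\sum_{i\in I_x} h(a_i,b_i) + \sum_j H_{1j}(1,c_j')$ where the second sum runs over possibly several values of $j\in\{2,\dots,m+n\}$. To finish, I would collect, for each fixed $j$, all the diagonal contributions into a single $H_{1j}(1,c_j)$ using the $K$-linearity of $F_{1j}(-)$ and $F_{j1}(-)$ (relation~\eqref{st1}) together with bilinearity of the bracket, so that $[F_{1j}(1),F_{j1}(c)] + [F_{1j}(1),F_{j1}(c')] = [F_{1j}(1),F_{j1}(c+c')]$; this yields exactly one term $H_{1j}(1,c_j)$ per index $j$, giving the claimed form $x = \sum_{i\in I_x} h(a_i,b_i) + \sum_{j=2}^{m+n} H_{1j}(1,c_j)$.

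The main obstacle is bookkeeping of signs: applying Lemma~\ref{Hidentities}(1) and (3) introduces $(-1)$-powers depending on $|i|,|j|,|a|,|b|$, and one must be careful that after absorbing these signs into the elements $a_i,b_i,c_j$ (which is harmless since $h$ and $H_{1j}(1,-)$ are $K$-linear in their entries) the result is genuinely a $K$-combination of the stated generators. There is no real algebraic difficulty beyond this: the reduction to $i=1$ is one application of the associativity-type identity Lemma~\ref{Hidentities}(1), and the extraction of $h(a,b)$ is the definition. I would present the argument on homogeneous elements and then extend by $K$-linearity.
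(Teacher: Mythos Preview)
Your proposal is correct and follows essentially the same route as the paper: reduce an arbitrary $H_{ij}(a,b)$ to the case $i=1$ via Lemma~\ref{Hidentities}(1) with third index $k=1$, then split off $h(a,b)$ using its definition. The paper is slightly terser---it writes the reduction as $H_{ij}(a,b)=H_{i1}(a,b)-(-1)^{(|i|+|j|+|a|)(|i|+|j|+|b|)}H_{j1}(ba,1)$ and simply says ``reduce to Case~1,'' absorbing the passage from $H_{i1}$ and $H_{j1}$ to $H_{1i}$ and $H_{1j}$ into the antisymmetry relation $H_{ij}(a,b)=-(-1)^{(|i|+|j|+|a|)(|i|+|j|+|b|)}H_{ji}(b,a)$ rather than invoking Lemma~\ref{Hidentities}(3)---but the content is the same.
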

\begin{proof}
Let $H_{ij}(a,b) \in \mcH$, where $1\le i \neq j \le m+n$. We consider the following two cases.
\begin{case}
$1 \in \{i,j\}$. Without loss of generality, we may assume $i=1$. In this case, we have
\begin{equation*}
H_{1j}(a,b) = H_{1j}(a,b)-(-1)^{|a||b|}H_{1j}(1,ba) + (-1)^{|a||b|}H_{1j}(1,ba)
= h(a,b) + (-1)^{|a||b|}H_{1j}(1,ba).
\end{equation*}
\end{case}
\begin{case}
$1 \notin \{i,j\}$. By Lemma \ref{Hidentities} (1), we have
\begin{equation*}
H_{ij}(a,b) = H_{i1}(a,b)-(-1)^{(|i|+|j|+|a|)(|i|+|j|+|b|)}H_{j1}(ba,1).
\end{equation*}
Thus we reduce this case to the Case 1. \qedhere
\end{case}
\end{proof}

\begin{prop} The map $\mu|_{\HC_1(A)}: \HC_1(A) \to \Ker(\varphi)$ given by $\ll a,b\gg \,\mapsto h(a,b)$ is surjective.
\end{prop}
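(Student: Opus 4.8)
The plan is to take an arbitrary element $t \in \Ker(\varphi)$ and exhibit it as $\mu|_{\HC_1(A)}$ applied to some element of $\HC_1(A)$. Since $\Ker(\varphi) \subseteq \mcH$, Lemma \ref{elementinH} lets me write
$t = \sum_{i \in I_t} h(a_i, b_i) + \sum_{j=2}^{m+n} H_{1j}(1, c_j)$
for suitable $a_i, b_i, c_j \in A$ and a finite index set $I_t$. The first summand is already $\mu\big(\sum_i \ll a_i, b_i \gg\big)$, so the crux is to understand the "extra" term $w := \sum_{j=2}^{m+n} H_{1j}(1, c_j)$ and show it, too, lies in the image of $\mu$ --- and that the resulting preimage actually lands in $\HC_1(A)$, i.e.\ that the relevant sum of commutators vanishes in $A$.

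The key observation is that the elements $H_{1j}(1, c)$ for $j \neq 1$ are \emph{linearly independent modulo the image of $\mu$} in a controlled way, and more importantly that $\varphi$ detects them on the diagonal. First I would apply $\varphi$ to the displayed expression for $t$. Since $\varphi(t) = 0$ and $\varphi\big(\sum_i h(a_i, b_i)\big) = E_{11}\big(\sum_i [a_i, b_i]\big)$ (computed in the discussion preceding Lemma \ref{elementinH}), while
$\varphi(H_{1j}(1, c_j)) = E_{11}(c_j) - (-1)^{|j|(|j|+|c_j|)} E_{jj}(c_j)$,
comparing the coefficient of $E_{jj}$ for each fixed $j \in \{2, \ldots, m+n\}$ forces $c_j = 0$ for all $j$. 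Hence $w = 0$ automatically, and $t = \sum_i h(a_i, b_i) = \mu\big(\sum_i \ll a_i, b_i \gg\big)$. Finally, reading off the coefficient of $E_{11}$ in $\varphi(t) = 0$ gives $\sum_i [a_i, b_i] = 0$, which is exactly the condition that $\sum_i \ll a_i, b_i \gg \,\in \HC_1(A)$. Therefore $t = \mu|_{\HC_1(A)}\big(\sum_i \ll a_i, b_i \gg\big)$, proving surjectivity.

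The main obstacle I anticipate is making the coefficient-comparison step on the diagonal fully rigorous: one must know that the diagonal matrices $E_{jj}(c)$ for $j = 1, \ldots, m+n$ are "independent enough" inside $\mfsl_{m|n}(A) \subseteq \mfgl_{m|n}(A)$ so that vanishing of $\varphi(t)$ genuinely forces each $c_j = 0$. This is clear since $\mfgl_{m|n}(A)$ is literally a matrix algebra over $A$, so $\sum_{j} E_{jj}(d_j) = 0$ in $\mfgl_{m|n}(A)$ implies every $d_j = 0$; one just has to carefully collect the $E_{jj}$-contributions from $\varphi\big(\sum_i h(a_i,b_i)\big)$ (which only hits $E_{11}$) and from each $\varphi(H_{1j}(1,c_j))$ (which hits $E_{11}$ and $E_{jj}$), and track the sign $(-1)^{|j|(|j|+|c_j|)}$, which is a unit. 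Everything else is a direct bookkeeping of the formulas already established, so the proof should be short.
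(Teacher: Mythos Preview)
Your proposal is correct and follows essentially the same argument as the paper: write $t\in\Ker(\varphi)\subseteq\mcH$ via Lemma~\ref{elementinH}, apply $\varphi$, and read off from the diagonal entries that each $c_j=0$ and that $\sum_i[a_i,b_i]=0$, so $\sum_i\ll a_i,b_i\gg\in\HC_1(A)$ maps to $t$. The only difference is cosmetic---you spell out the coefficient comparison in $\mfgl_{m|n}(A)$ a bit more explicitly than the paper does.
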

\begin{proof} Let $x\in \Ker(\varphi)$. Since $\Ker(\varphi)\subset \mcH$, by Lemma \ref{elementinH} we can write $$x = \sum_{i \in I_x}h(a_i,b_i)+ \sum_{j=2}^{m+n} H_{1j}(1,c_j),$$ where $a_i,b_i,c_j \in A$. Then $\varphi(x)=0$ implies that
\begin{equation*}
\sum_{i \in I_x}E_{11}([a_i,b_i]) + \sum_{j=2}^{m+n} \Big(E_{11}(c_j)-(-1)^{|j|(|j|+|c_j|)}E_{jj}(c_j)\Big) =0.
\end{equation*}
Hence $c_j=0$ and $\sum_{i \in I_x}[a_i,b_i]=0$. Therefore, $\sum_{i \in I_x} \ll a_i,b_i\gg\  \in\HC_1(A)$ is a preimage of $x=\sum_{i \in I_x}h(a_i,b_i)$ under the restriction map $\mu|_{\HC_1(A)}$.
\end{proof}

For the next theorem we assume that $A$ has a homogeneous $K$-basis $\{a_\beta\}_{\beta\in\mcB}$
($\mcB$ is an index set), which contains the identity element $1$ of $A$.

\begin{thm}\label{kernel} If $m+n\ge 3$, the kernel of the central extension $(\mfst_{m|n}(A), \varphi)$ of
the Lie superalgebra $\mfsl_{m|n}(A)$ is isomorphic to $\HC_1(A)$ as $K$-modules.
\end{thm}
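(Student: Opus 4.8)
The plan is to leverage the surjection $\mu|_{\HC_1(A)}:\HC_1(A)\to\Ker(\varphi)$ established above: since $\HC_1(A)$ is by definition a $K$-submodule of $\ll A,A\gg$, it is enough to build a $K$-linear map $\nu:\Ker(\varphi)\to\ll A,A\gg$ whose composite with $\mu|_{\HC_1(A)}$ is the canonical inclusion $\HC_1(A)\hookrightarrow\ll A,A\gg$; this forces $\mu|_{\HC_1(A)}$ to be injective, hence a $K$-module isomorphism. I would obtain $\nu$ by mapping $\mfst_{m|n}(A)$ into an auxiliary central extension of $\mfsl_{m|n}(A)$ having kernel $\ll A,A\gg$.

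First I would produce that auxiliary extension from an explicit $2$-cocycle. Define
$$\tau:\mfgl_{m|n}(A)\times\mfgl_{m|n}(A)\to\ll A,A\gg,\qquad \tau(X,Y)=\sum_{1\le k,l\le m+n}(-1)^{\theta(k,l)}\ll x_{kl},y_{lk}\gg,$$
for $X=(x_{kl}),Y=(y_{kl})$, where the signs $(-1)^{\theta(k,l)}$ (which also depend on the degrees of the relevant entries) are chosen so that $\tau$ is $K$-bilinear of degree $0$ and alternating in the super sense; in particular $\tau(E_{ij}(a),E_{kl}(b))=0$ unless $(k,l)=(j,i)$, and one normalizes so that $\tau(E_{1j}(a),E_{j1}(b))=\ll a,b\gg$ for $j\neq 1$. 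The key point is that $\tau$ is a $2$-cocycle, i.e.\ (by the criterion recalled in Section 2) that $\mfgl_{m|n}(A)\oplus\ll A,A\gg$ with bracket $[(X,p),(Y,q)]=([X,Y],\tau(X,Y))$ is a Lie superalgebra. By bilinearity the cocycle identity only needs to be tested on triples of matrix units $E_{ij}(a),E_{kl}(b),E_{pq}(c)$, and after discarding the vanishing cases the surviving identities turn out to be exactly the two families of defining relations of $\ll A,A\gg$ (the alternating relation and the cyclic relation $(-1)^{|a||c|}a\otimes bc+(-1)^{|b||a|}b\otimes ca+(-1)^{|c||b|}c\otimes ab\equiv 0$). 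Granting this, and using $\mfsl_{m|n}(A)=[\mfgl_{m|n}(A),\mfgl_{m|n}(A)]$, the $K$-submodule $\mcE:=\mfsl_{m|n}(A)\oplus\ll A,A\gg$ is a subsuperalgebra and the projection $\pi:\mcE\to\mfsl_{m|n}(A)$ is a central extension with $\Ker(\pi)=\ll A,A\gg$.

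Next I would use the presentation of $\mfst_{m|n}(A)$. Since $\tau(E_{ij}(a),E_{jk}(b))=0$ for distinct $i,j,k$ and $\tau(E_{ij}(a),E_{kl}(b))=0$ for $i\neq j\neq k\neq l\neq i$, the elements $(E_{ij}(a),0)\in\mcE$ satisfy relations \eqref{st1}--\eqref{st3}, so there is a homomorphism $\rho:\mfst_{m|n}(A)\to\mcE$ with $\rho(F_{ij}(a))=(E_{ij}(a),0)$. As $\pi\circ\rho$ agrees with $\varphi$ on the generators, $\pi\circ\rho=\varphi$, so $\rho(\Ker(\varphi))\subseteq\Ker(\pi)=\ll A,A\gg$ and I set $\nu:=\mathrm{pr}_2\circ\rho|_{\Ker(\varphi)}$. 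Then I would compute $\rho$ on $h(a,b)=H_{1j}(a,b)-(-1)^{|a||b|}H_{1j}(1,ba)$: a short calculation in $\mcE$, using $\tau(E_{1j}(a),E_{j1}(b))=\ll a,b\gg$ and the identity $\ll 1,x\gg=0$ in $\ll A,A\gg$ (immediate from the defining relations), gives $\rho(h(a,b))=(E_{11}([a,b]),\ll a,b\gg)$. Consequently, for $\sum_i\ll a_i,b_i\gg\in\HC_1(A)$ (so $\sum_i[a_i,b_i]=0$) one gets $\nu\big(\mu|_{\HC_1(A)}(\sum_i\ll a_i,b_i\gg)\big)=\nu(\sum_i h(a_i,b_i))=\sum_i\ll a_i,b_i\gg$, i.e.\ $\nu\circ\mu|_{\HC_1(A)}$ is the inclusion $\HC_1(A)\hookrightarrow\ll A,A\gg$. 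Hence $\mu|_{\HC_1(A)}$ is injective, and together with its surjectivity it is the asserted $K$-module isomorphism $\HC_1(A)\cong\Ker(\varphi)$.

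The main obstacle is the one genuinely computational ingredient: verifying the cocycle identity for $\tau$ and, hand in hand with it, pinning down the Koszul signs $\theta(k,l)$ that make $\tau$ alternating, of degree $0$, and normalized as stated. Everything else is bookkeeping with the presentation of $\mfst_{m|n}(A)$ and the identities for $h(a,b)$ in Lemma \ref{habc}. (As in the rest of this section we take $m\ge 1$, the remaining case being covered by identifying $\mfsl_{0|n}(A)$ with $\mfsl_n(A)$.)
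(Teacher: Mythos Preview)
Your argument is correct and takes a genuinely different route from the paper's proof. The paper does \emph{not} build an auxiliary central extension; instead it invokes the standing hypothesis (stated just before the theorem) that $A$ has a homogeneous $K$-basis $\{a_\beta\}$, selects from the generating set $\{H_{1j}(a_{\beta_1},a_{\beta_2})\}$ a $K$-basis of $\mcH_{1j}$, and defines $\nu:\mcH_{1j}\to\ll A,A\gg$ on that basis by $H_{1j}(a_{\beta_1},a_{\beta_2})\mapsto\ll a_{\beta_1},a_{\beta_2}\gg$. It then asserts $\nu(H_{1j}(a,b))=\ll a,b\gg$ for all $a,b$ and reads off injectivity of $\mu|_{\HC_1(A)}$ from $\ll 1,ba\gg=0$.

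Your cocycle construction is essentially what the paper relegates to the Remark following the theorem, where the map $\nu:\mfst_{m|n}(A)\to\ll A,A\gg$ with $\nu(H_{ij}(a,b))=(-1)^{|i|(|i|+|ab|)}\ll a,b\gg$ is stated to be well defined; your $\tau$ is the $2$-cocycle whose existence makes that remark work, and the sign you are looking for is $\tau(E_{ij}(a),E_{ji}(b))=(-1)^{|i|(|i|+|a|+|b|)}\ll a,b\gg$. The trade-off is clear: the paper's basis argument is shorter on the page but leans on the freeness of $A$ (and on the unproved compatibility claim that the basis-defined $\nu$ really sends every $H_{1j}(a,b)$ to $\ll a,b\gg$), whereas your approach dispenses with the freeness hypothesis entirely at the cost of the cocycle verification you flag as the main obstacle. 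That verification is routine once the sign is fixed as above, and your proof then yields the theorem for arbitrary $A$, which is strictly stronger than what the paper proves.
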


\begin{proof} We are left to show the injectivity of $\mu|_{\HC_1(A)}$. Let $\mcH_{1j}$ be the $K$-submodule of $\mcH$ generated by $H_{1j}(a,b)$ for $a,b\in A$. Since the set $\{H_{1j}(a_{\beta_1},a_{\beta_2})\}_{\beta_1, \beta_2\in\mcB}$ forms a generating set for $\mcH_{1j}$, we can choose a $K$-basis for $\mcH_{1j}$ from this generating set and denote it by $\{H_{1j}(a_{\beta_1},a_{\beta_2})\}_{\beta_1\in\mcB_{1}, \beta_2\in\mcB_{2}}$, where $\mcB_{1}$ and $\mcB_{2}$ are subsets of $\mcB$. Define a $K$-module homomorphism
$$\nu:\ \mcH_{1j}  \to \; \ll A, A\gg, \, \quad   H_{1j}(a_{\beta_1},a_{\beta_2})\mapsto \,\ll a_{\beta_1},a_{\beta_2}\gg.$$ Then $H_{1j}(a,b)=\,\ll a,b\gg$ for $a,b\in A$.

To show $\mu|_{\HC_1(A)}$ is injective, we let $\mu(\ll a, b\gg)=h(a,b)=H_{1j}(a,b)-(-1)^{|a||b|}H_{1j}(1, ba)=0$. Then $\nu(H_{1j}(a,b))=\nu((-1)^{|a||b|}H_{1j}(1, ba))$, i.e., $\ll a,b\gg\,=(-1)^{|a||b|}\ll1,ba\gg$. In $\ll A, A\gg$ we have $(-1)^{|b||1|}\ll b, a\cdot 1\gg+(-1)^{|a||b|}\ll a, 1\cdot b\gg+(-1)^{|1||a|}\ll 1, ba\gg=0$, so $\ll 1, ba\gg=0$. Thus $\ll a, b\gg=0$ and $\mu|_{\HC_1(A)}$ is injective.
\end{proof}

\begin{rem} In fact, if we define $\nu: \; \mfst_{m|n}(A)\to\; \ll A, A\gg$ by $\nu(F_{ij}(a))=0$ for $i\neq j$, $a\in A$, and $\nu(H_{ij}(a,b))=(-1)^{|i|(|i|+|ab|)}\ll a, b \gg$ for $i\neq j$, $a,b\in A$, then in a similar way as in the proof of Theorem \ref{kernel} we can show that $\nu$ is a well-defined $K$-module homomorphism. In addition, the following diagram is commutative.
$$\xymatrix{\mfst_{m|n}(A)\ar[r]^{\varphi}\ar[d]_{\nu}&\mfsl_{m|n}(A)\ar[d]^{\mathrm{str}}\\
\ll A, A \gg\ar[r]^{d}& [A,A]}$$
\end{rem}

\section{Universal central extension of $\mfsl_{m|n}(A)$, $m+n \geq 5$}

In this section, we will discuss the universality of the central extension $(\mfst_{m|n}(A), \varphi)$ of the Lie superalgebra $\mfsl_{m|n}(A)$ when $m+n \geq 5$.
Let $(\mfe,\psi)$ be an arbitrary central extension of the Lie superalgebra $\mfsl_{m|n}(A)$.
Since $\psi$ is surjective, for any $E_{ij}(a) \in \mfsl_{m|n}(A)$ we can choose some $\wt{E}_{ij}(a) \in \psi^{-1}(E_{ij}(a))$.
First, we observe that the commutator $[\wt{E}_{ij}(a),\wt{E}_{kl}(b)]$ doesn't depend on
the choice of representatives in $ \psi^{-1}(E_{ij}(a)) $ and $\psi^{-1}(E_{kl}(b))$.
This follows from the fact that, if $ \check{E}_{ij}(a) \in \psi^{-1}(E_{ij}(a)) $,
then $\check{E}_{ij}(a) -\wt{E}_{ij}(a) \in \Ker(\psi)$ and $\Ker(\psi)$ is central in $\mfe$.
Next, we define the elements $w_{ij}(a) := [\wt{E}_{ik}(a),\wt{E}_{kj}(1)]$ in $\mfe$ for some $k\neq i,j$.
The following lemma shows that the definition of $w_{ij}(a)$ does not depend on the choice of $k$.

\begin{lem}\label{defeta3}
Let $1\le i \neq j \le m+n$, and suppose that $1\le k,l\le m+n$, and $k,l$ are distinct from $i$ and
$j$. Then we have $ [\wt{E}_{ik}(a),\wt{E}_{kj}(b)] =
[\wt{E}_{il}(a),\wt{E}_{lj}(b)] $.
\end{lem}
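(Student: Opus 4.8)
The plan is to split into the trivial case $k=l$ and the case $k\neq l$, and in the latter to use the hypothesis $m+n\ge 5$ to introduce a fifth index $p\notin\{i,j,k,l\}$, then to shuttle the $A$-entries from the pair $(i,k,j)$ to the pair $(i,l,j)$ via the super Jacobi identity, discarding along the way every correction term because it is central and hence annihilated by the outstanding bracket.

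First I would isolate the bookkeeping principle already noted above the lemma: if $u,v\in\mfe$ and $u'-u,\,v'-v\in\Ker(\psi)\subseteq Z(\mfe)$, then $[u',v']=[u,v]$; so a bracket in $\mfe$ depends only on the $\psi$-images of its two entries. Two consequences will be used repeatedly. If $r,s,t$ are pairwise distinct then $\psi\big([\wt{E}_{rs}(a),\wt{E}_{st}(b)]\big)=E_{rt}(ab)$, so $[\wt{E}_{rs}(a),\wt{E}_{st}(b)]$ is a legitimate choice of $\wt{E}_{rt}(ab)$ and may be substituted for it inside any further bracket. And if $s\neq t$ together with $r\neq u$, then $\psi\big([\wt{E}_{rs}(a),\wt{E}_{tu}(b)]\big)=0$, so that bracket lies in $Z(\mfe)$.

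Now assume $k\neq l$; then $i,j,k,l$ are four distinct indices, and since $m+n\ge 5$ we may pick $p$ distinct from all four. Choosing $\wt{E}_{kj}(b):=[\wt{E}_{kp}(b),\wt{E}_{pj}(1)]$ and applying the super Jacobi identity to $[\wt{E}_{ik}(a),[\wt{E}_{kp}(b),\wt{E}_{pj}(1)]]$, the correction term contains the factor $[\wt{E}_{ik}(a),\wt{E}_{pj}(1)]$, which is central because $k\neq p$ and $i\neq j$, so it drops; this yields
\[
[\wt{E}_{ik}(a),\wt{E}_{kj}(b)]=[[\wt{E}_{ik}(a),\wt{E}_{kp}(b)],\wt{E}_{pj}(1)]=[\wt{E}_{ip}(ab),\wt{E}_{pj}(1)].
\]
Next, substitute $\wt{E}_{ip}(ab)=[\wt{E}_{il}(a),\wt{E}_{lp}(b)]$ and run the super Jacobi identity in the other direction; the correction term now carries the central factor $[\wt{E}_{il}(a),\wt{E}_{pj}(1)]$ (central since $l\neq p$, $i\neq j$) and again vanishes, leaving $[\wt{E}_{il}(a),[\wt{E}_{lp}(b),\wt{E}_{pj}(1)]]=[\wt{E}_{il}(a),\wt{E}_{lj}(b)]$, which is the claim.

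The only genuine obstacle is the availability of the extra index $p$ disjoint from $\{i,j,k,l\}$ when $k\neq l$ — this is exactly where $m+n\ge 5$ is needed, and with merely four indices the argument breaks down. By contrast the sign bookkeeping is harmless here: every super Jacobi correction term is killed because its inner bracket is central, so no explicit computation of cocycle signs is required.
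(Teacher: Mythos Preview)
Your argument is correct, but it takes a longer route than the paper's. The paper goes directly from $k$ to $l$ in a single application of the super Jacobi identity: it writes $\wt{E}_{kj}(b)=[\wt{E}_{kl}(1),\wt{E}_{lj}(b)]+c_1$ with $c_1$ central, expands $[\wt{E}_{ik}(a),[\wt{E}_{kl}(1),\wt{E}_{lj}(b)]]$, and observes that the correction term contains the central factor $[\wt{E}_{ik}(a),\wt{E}_{lj}(b)]$ while the main term is $[\wt{E}_{il}(a)+c_2,\wt{E}_{lj}(b)]$. This uses only the four indices $i,j,k,l$ already at hand. Your two-step detour through an auxiliary fifth index $p$ is valid but unnecessary, and it forces you to invoke $m+n\ge 5$ for this lemma, when in fact the lemma itself holds as soon as $i,j,k,l$ are four distinct indices; the hypothesis $m+n\ge 5$ is genuinely needed only later, in verifying the relation $[w_{ij}(a),w_{kl}(b)]=0$ for four distinct indices. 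So your remark that ``this is exactly where $m+n\ge 5$ is needed'' mislocates the role of that assumption.
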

\begin{proof}
If $k\neq l$, then we have
\begin{equation*}
\begin{split}
[\wt{E}_{ik}(a),\wt{E}_{kj}(b)] = & \big[ \wt{E}_{ik}(a), [\wt{E}_{kl}(1),\wt{E}_{lj}(b)] + c_1\big]  \\
= & \big[ [\wt{E}_{ik}(a),\wt{E}_{kl}(1)],\wt{E}_{lj}(b) \big] +  (-1)^{(|i|+|k|+|a|)(|k|+|l|)}\big[ \wt{E}_{kl}(1), [ \wt{E}_{ik}(a), \wt{E}_{lj}(b)]\big] \\
= & [\wt{E}_{il}(a) + c_2,\wt{E}_{lj}(b)] + (-1)^{(|i|+|k|+|a|)(|k|+|l|)} [\wt{E}_{kl}(1), c_3] \\
= & [\wt{E}_{il}(a),\wt{E}_{lj}(b)],
\end{split}
\end{equation*}
where $c_1,c_2,c_3$ are central elements in $\mfe$.
\end{proof}

\begin{thm}\label{ext5}
For $m+n\ge 5$, $(\mfst_{m|n}(A),\varphi)$ is the universal
central extension of $\mfsl_{m|n}(A)$.
\end{thm}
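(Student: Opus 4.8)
The plan is to show that $(\mfst_{m|n}(A),\varphi)$ is universal by exploiting the standard criterion: a central extension $(\wt L,\varphi)$ of a perfect Lie superalgebra is universal if and only if $\wt L$ is perfect and every central extension of $\wt L$ splits. Since $\mfst_{m|n}(A)$ is perfect (this follows from relation \eqref{st2}, exactly as in Lemma~3.3), it remains only to prove that every central extension $(\mfe,\psi)$ of $\mfst_{m|n}(A)$ splits. Equivalently — and this is the route I would actually take — it suffices to construct, for an arbitrary central extension $(\mfe,\psi)$ of $\mfsl_{m|n}(A)$, a Lie superalgebra homomorphism $\eta\colon\mfst_{m|n}(A)\to\mfe$ with $\psi\circ\eta=\varphi$; uniqueness is then automatic from perfectness of $\mfst_{m|n}(A)$. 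So the heart of the matter is the \emph{existence} of $\eta$.

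The construction of $\eta$ proceeds by defining $\eta(F_{ij}(a)) := w_{ij}(a) = [\wt E_{ik}(a),\wt E_{kj}(1)]$ for any choice of $k\neq i,j$; by Lemma~\ref{defeta3} this is independent of $k$, and here is where $m+n\ge 5$ first enters — we need enough indices that for any pair $i\neq j$ there exist two further distinct indices $k,l$, and more importantly the Jacobi-identity manipulations below repeatedly require ``fresh'' indices distinct from all those currently in play (typically up to four or five at once). To see that $\eta$ respects the defining relations \eqref{st1}--\eqref{st3} of $\mfst_{m|n}(A)$, I would check each:
\begin{itemize}
\item[(i)] $K$-linearity of $a\mapsto w_{ij}(a)$ follows from bilinearity of the bracket and linearity of $a\mapsto \wt E_{ik}(a)$ modulo the center, together with the fact that the bracket kills the center.
\item[(ii)] The relation $[w_{ij}(a),w_{jk}(b)] = w_{ik}(ab)$ for distinct $i,j,k$: pick a fifth index $l$ distinct from $i,j,k$, write $w_{ij}(a)=[\wt E_{il}(a),\wt E_{lj}(1)]$ and $w_{jk}(b)=[\wt E_{jl'}(b),\wt E_{l'k}(1)]$ with suitable auxiliary indices, expand by the super Jacobi identity, and collapse the central correction terms (they vanish under any further bracket, but here one arranges the computation so the final equality is exact, not just modulo center).
\item[(iii)] The relation $[w_{ij}(a),w_{kl}(b)]=0$ for $i\neq j\neq k\neq l\neq i$: similar, choosing auxiliary indices to make all the intermediate $\wt E$'s commute, then the bracket of $w$'s lands in the center \emph{and} equals a bracket of things differing from honest generators by central elements, forcing it to be $0$.
\end{itemize}

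Once $\eta$ is shown to be a well-defined homomorphism, $\psi\circ\eta=\varphi$ holds on generators by construction (since $w_{ij}(a)\in\psi^{-1}([E_{ik}(a),E_{kj}(1)])=\psi^{-1}(E_{ij}(a))$), hence everywhere. This gives the universal property. The main obstacle, and the step that consumes the real work, is item~(ii): verifying $[w_{ij}(a),w_{jk}(b)]=w_{ik}(ab)$ \emph{on the nose} rather than merely modulo the center. The subtlety is that $w_{ik}(ab)$ is itself only defined up to the ambiguity absorbed by Lemma~\ref{defeta3}, so one must bracket twice (using a bracket with a third $\wt E$ to kill the central discrepancy and then re-expand) — a bootstrapping argument of exactly the flavor used to prove Lemma~\ref{HN}\eqref{HF2} in the excerpt. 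This is precisely the place where the hypothesis $m+n\ge 5$ is indispensable: for $m+n=3,4$ there simply are not enough indices to run the argument, which is why those cases (treated in the cited references for the non-super and super settings) require separate, more delicate analysis. A clean way to organize the computation is to first establish the auxiliary identities $[\wt E_{ij}(a),\wt E_{kl}(b)]=0$ whenever $\{i,j\}\cap\{k,l\}=\emptyset$ and $[\wt E_{ij}(a),\wt E_{jk}(b)]=w_{ik}(ab)$ for distinct $i,j,k$ (the latter being the content that must be bootstrapped), after which (i)--(iii) become short.
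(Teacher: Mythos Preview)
Your overall strategy matches the paper's exactly: given an arbitrary central extension $(\mfe,\psi)$ of $\mfsl_{m|n}(A)$, set $\eta(F_{ij}(a))=w_{ij}(a)=[\wt E_{ik}(a),\wt E_{kj}(1)]$ and verify relations \eqref{st1}--\eqref{st3}. However, you have the location of the difficulty backwards. Relation (ii) is \emph{not} where $m+n\ge 5$ enters and needs no bootstrapping: pick a single fourth index $l\neq i,j,k$, write $w_{jk}(b)=[\wt E_{jl}(b),\wt E_{lk}(1)]$, and apply the super Jacobi identity. The inner brackets satisfy $[w_{ij}(a),\wt E_{jl}(b)]=\wt E_{il}(ab)+c_1$ and $[w_{ij}(a),\wt E_{lk}(1)]=c_2$ with $c_1,c_2\in\Ker(\psi)$ central, so the outer brackets give $[\wt E_{il}(ab),\wt E_{lk}(1)]=w_{ik}(ab)$ on the nose. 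There is no ambiguity to bootstrap away; Lemma~\ref{defeta3} already makes $w_{ik}(ab)$ well defined.

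It is relation (iii) that genuinely requires $m+n\ge 5$: when $i,j,k,l$ are four distinct indices one needs a \emph{fifth} index $t\neq i,j,k,l$ to write $w_{kl}(b)=[\wt E_{kt}(b),\wt E_{tl}(1)]$, so that after Jacobi both inner brackets $[w_{ij}(a),\wt E_{kt}(b)]$ and $[w_{ij}(a),\wt E_{tl}(1)]$ are central (this uses $j\neq k,t$ and $i\neq t,l$), whence the outer brackets vanish. Finally, your proposed auxiliary identity $[\wt E_{ij}(a),\wt E_{kl}(b)]=0$ for $\{i,j\}\cap\{k,l\}=\emptyset$ is false as stated: this bracket lies in $\Ker(\psi)$ but need not vanish --- indeed, that is exactly why one must bury it inside a further bracket (with the fresh index $t$) to kill it.
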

\begin{proof} Define a map $\eta: \mfst_{m|n}(A) \rightarrow
\mfe$ by letting
$\eta(F_{ij}(a)) =w_{ij}(a)$. To show that $\eta$ is a well-defined Lie superalgebra homomorphism, we need to
prove the following identities in $\mfe$.
\begin{align}
& w_{ij}(xa+yb) =x w_{ij}(a) +y w_{ij}(b), \; \text{ for all } a,b \in A, \, x,y \in
K. \label{w1}\\
& [w_{ij}(a), w_{jk}(b)] = w_{ik}(ab), \text{ for distinct } i, j, k,
\label{w2}
\\
& [w_{ij}(a), w_{kl}(b)] = 0, \text{ for } i\neq j\neq k\neq l\neq i. \label{w3}
\end{align}

The identity \eqref{w1} follows from the fact that
$\wt{E}_{il}(xa+yb) = x\wt{E}_{il}(a) + y \wt{E}_{il}(b) + c$ with $c
\in \Ker(\psi)$. As for \eqref{w2}, choose $l\neq i,j,k$; then
\begin{equation*}
\begin{split}
[w_{ij}(a), w_{jk}(b)] = & [ w_{ij}(a), [\wt{E}_{jl}(b), \wt{E}_{lk}(1)]] \\
= &  \big[ [w_{ij}(a), \wt{E}_{jl}(b)], \wt{E}_{lk}(1) \big]
+ (-1)^{(|i|+|j|+|a|)(|j|+|l|+|b|)} \big[ \wt{E}_{jl}(b), [w_{ij}(a), \wt{E}_{lk}(1)]\big] \\
= & [\wt{E}_{il}(ab)+c_1, \wt{E}_{lk}(1)] + (-1)^{(|i|+|j|+|a|)(|j|+|l|+|b|)}[\wt{E}_{jl}(b), c_2]  \\
= & w_{ik}(ab), \text{ where }c_1,c_2 \in \Ker(\psi).
\end{split}
\end{equation*}

We need the assumption $m+n\ge 5$ to prove the identity \eqref{w3}
holds. Choose $t\neq i,j,k,l$. Then
\begin{equation*}
\begin{split}
[w_{ij}(a), w_{kl}(b)] = & \big[ w_{ij}(a), [\wt{E}_{kt}(b),\wt{E}_{tl}(1)]\big] \\
= &  \big[ [w_{ij}(a),\wt{E}_{kt}(b)],\wt{E}_{tl}(1)\big] +
(-1)^{(|k|+|t|+|b|)(|i|+|j|+|a|)} \big[ \wt{E}_{kt}(b),  [w_{ij}(a), \wt{E}_{tl}(1)] \big] \\
= & [c_1, \wt{E}_{tl}(1)] + (-1)^{(|k|+|t|+|b|)(|i|+|j|+|a|)} [\wt{E}_{kt}(b), c_2] \\
= & 0, \text{ where }c_1,c_2 \in \Ker(\psi).
\end{split}
\end{equation*}

Thus $\eta$ is a well-defined Lie superalgebra homomorphism. The uniqueness of $\eta$ follows from the fact that, since $F_{ij}(a)
= [F_{ik}(a),F_{kj}(1)] $ for any distinct $i,j,k$, we must have
$\eta(F_{ij}(a)) = [\eta(F_{ik}(a)),\eta(F_{kj}(1))] $ and
$\eta(F_{ik}(a)) - \wt{E}_{ik}(a), \eta(F_{kj}(1)) - \wt{E}_{kj}(1)
\in \Ker(\psi)$.
\end{proof}

We now turn our attention to the Lie superalgebra $\mfsl_I(A)$ (see \cite{NS}), where $I=I_{\bar{0}}\cup I_{\bar{1}}$ is a (possibly infinite) superset. When $|I_{\bar{0}}|=m$ and $|I_{\bar{1}}|=n$, $\mfsl_I(A)$ is the Lie superalgebra $\mfsl_{m|n}(A)$. Let $\mcF$ be the set of finite subsets of $I$ ordered by inclusion. Then the Lie superalgebra $\mfsl_I(A)$ is a direct limit:
$$
\mfsl_I(A)=\bigcup_{F\in\mcF} \mfsl_F(A)\cong \varinjlim_{F\in\mcF}\mfsl_F(A).
$$
Define the Steinberg Lie superalgebra $\mfst_I(A)$ to be the Lie superalgebra over $K$
presented by generators $F_{ij}(a)$ with $i,j\in I,\ i\neq j,\ a\in A$ and relations (\ref{st1})--(\ref{st3}).
We then have a canonical Lie superalgebra epimorphism
$$
\varphi_I: \mfst_I(A)\to \mfsl_I(A), \ \quad F_{ij}(a)\mapsto E_{ij}(a).
$$

\begin{thm}
Assume $|I|\geq 5$. Then $(\varphi_I,\mfst_I(A))$ is the universal central extension of $\mfsl_I(A)$.
In addition, if $A$ is a free $K$-module, then $\Ker(\varphi_I)$ is isomorphic to $\HC_1(A)$.
\end{thm}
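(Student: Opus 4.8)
The plan is to reduce the infinite-rank statement to the finite-rank one already proved in Theorem \ref{ext5} (resp.\ Theorem \ref{kernel}) by a direct-limit argument, using the fact that the universal central extension functor and $\HC_1$ commute with direct limits. First I would verify that $\mfst_I(A)\cong\varinjlim_{F\in\mcF}\mfst_F(A)$, where the transition maps $\mfst_F(A)\to\mfst_{F'}(A)$ for $F\subseteq F'$ are induced by $F_{ij}(a)\mapsto F_{ij}(a)$ on generators; this is immediate from the presentation by generators and relations, since every relation of $\mfst_I(A)$ involves only finitely many indices and hence lies in some $\mfst_F(A)$. The epimorphisms $\varphi_F:\mfst_F(A)\to\mfsl_F(A)$ are compatible with the transition maps on both sides, so $\varphi_I=\varinjlim_F\varphi_F$. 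Since each $\mfsl_F(A)$ with $|F|\geq 5$ is perfect (Lemma~3.3, applied in the obvious way to arbitrary finite index sets), $\mfsl_I(A)$ is perfect as a directed union of perfect subalgebras, so its universal central extension exists.

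Next I would invoke the key homological fact: for a directed system of perfect Lie superalgebras, $\uce(\varinjlim L_F)\cong\varinjlim\uce(L_F)$; this follows from the universal property of $\uce$ together with the universal property of the colimit, exactly as in the Lie algebra case. Concretely, I would check that $(\varinjlim_F\mfst_F(A),\varinjlim_F\varphi_F)$ satisfies the universal property: given any central extension $(\mfe,\psi)$ of $\mfsl_I(A)$, pulling back along $\mfsl_F(A)\into\mfsl_I(A)$ gives a central extension of $\mfsl_F(A)$ for each $F$ with $|F|\geq 5$, so by Theorem~\ref{ext5} there is a unique lift $\eta_F:\mfst_F(A)\to\mfe$; uniqueness forces the $\eta_F$ to be compatible with the transition maps, hence they glue to a unique $\eta:\mfst_I(A)\to\mfe$ with $\psi\circ\eta=\varphi_I$. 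For the finitely many $F$ with $|F|<5$ one notes that such $\mfst_F(A)$ maps into $\mfst_{F'}(A)$ for some $F'$ with $|F'|\geq5$, so no separate argument is needed. This establishes that $(\mfst_I(A),\varphi_I)$ is the universal central extension of $\mfsl_I(A)$.

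For the second assertion, I would compute $\Ker(\varphi_I)$. Since direct limits are exact, $\Ker(\varphi_I)=\varinjlim_F\Ker(\varphi_F)$. When $A$ is a free $K$-module it certainly has a homogeneous $K$-basis containing $1$, so Theorem~\ref{kernel} applies to each $\mfst_F(A)$ with $|F|\geq3$, giving $\Ker(\varphi_F)\cong\HC_1(A)$ as $K$-modules. The point that requires care is that these isomorphisms are compatible with the transition maps: the isomorphism is realized by $\mu|_{\HC_1(A)}:\ll a,b\gg\mapsto h(a,b)$, where $h(a,b)=H_{1j}(a,b)-(-1)^{|a||b|}H_{1j}(1,ba)$, and this formula is manifestly stable under $\mfst_F(A)\to\mfst_{F'}(A)$ (choosing the distinguished index, say the smallest element of $F$, consistently). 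Hence $\varinjlim_F\Ker(\varphi_F)\cong\varinjlim_F\HC_1(A)=\HC_1(A)$, since the transition maps on $\HC_1(A)$ are the identity.

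The main obstacle I anticipate is not any single computation but the bookkeeping needed to make the direct-limit identifications genuinely functorial: one must fix, once and for all, a coherent choice of ``base index'' (e.g.\ a distinguished element used in the definition of $h(a,b)$ and in the maps $w_{ij}$) so that all the structure maps in sight — the $\varphi_F$, the lifts $\eta_F$, and the isomorphisms $\Ker(\varphi_F)\cong\HC_1(A)$ — commute strictly with the inclusions $F\subseteq F'$. Once this is set up, each verification reduces to a relation already established in the finite-rank sections, and the exactness of $\varinjlim$ does the rest. A minor additional point is to confirm that Lemmas~3.2–3.3, Proposition~\ref{inH}, and Theorems~\ref{kernel} and~\ref{ext5}, though stated for $\mfsl_{m|n}(A)$ with an ordered index set $\{1,\dots,m+n\}$, hold verbatim for an arbitrary finite superset $F$ with $|F_{\bar0}|,|F_{\bar1}|$ prescribed — this is clear since none of those proofs used anything beyond the cardinalities and the $\Z/2\Z$-grading of the index set.
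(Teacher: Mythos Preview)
Your proposal is correct and follows essentially the same route as the paper: reduce to the finite-rank case via direct limits, then apply Theorems~\ref{ext5} and~\ref{kernel}. The only difference is packaging---the paper simply invokes \cite[Theorem~1.6]{NS} for the statement that $\uce$ commutes with $\varinjlim$, whereas you unpack this by verifying the universal property directly via pullback and gluing of the lifts $\eta_F$; similarly, the paper's one-line appeal to Theorem~\ref{kernel} for the kernel is what your last paragraph spells out with the explicit compatibility check for $\mu|_{\HC_1(A)}$. One small wording slip: the subsets $F$ with $|F|<5$ are not ``finitely many'' when $I$ is infinite, but your actual argument (that $\{F:|F|\ge 5\}$ is cofinal) is the right one.
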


\begin{proof} In \cite{NS} Theorem 1.6 it was proved that the $\uce$ functor commutes with the $\dirlim$ functor. Thus we have $$\uce(\mfsl_I(A))\cong\uce(\dirlim_{F\in \mcF}\mfsl_F(A))\cong\dirlim_{F\in \mcF}\uce(\mfsl_F(A)).$$ By Theorem \ref{ext5}, we know that $\uce(\mfsl_F(A))=\mfst_F(A)$ for $|F|\geq 5$. Therefore $$\uce(\mfsl_I(A))\cong \dirlim_{F\in \mcF}\mfst_F(A)\cong \mfst_I(A).$$ In addition, if $A$ is a free $K$-module, we apply Theorem \ref{kernel} and conclude that $\Ker(\varphi_I)\cong\HC_1(A)$.
\end{proof}

\section{Universal central extensions of $\mfsl_{m|n}(A)$, $m+n=3, 4$}

In this section, we will study the universal central extensions of $\mfsl_{m|n}(A)$ when $m+n=3, 4$. By Proposition \ref{inH},
$(\mfst_{m|n}(A),\varphi)$ is a central extension of $\mfsl_{m|n}(A)$ for $m+ n \geq 3$, thus the universal central extension of the Lie superalgebra
$\mfst_{m|n}(A)$ is also the universal central extension of $\mfsl_{m|n}(A)$.

\subsection{Universal central extensions of $\mfst_{2|1}(A)$ and $\mfst_{3|1}(A)$}

The following theorem claims that $\mfst_{2|1}(A)$ and $\mfst_{3|1}(A)$ are both centrally closed, thus the universal central extension of $\mfsl_{m|1}(A)$ is $\mfst_{m|1}(A)$ when $m=2$ or $m=3$.

\begin{thm} \label{2131}
$\mfst_{2|1}(A)$ and $\mfst_{3|1}(A)$ are both centrally closed.
\end{thm}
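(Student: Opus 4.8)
The plan is to show that every central extension $(\mfe,\psi)$ of $\mfst_{m|1}(A)$ for $m=2,3$ splits, i.e. that there is a Lie superalgebra section $\mfst_{m|1}(A)\to\mfe$. Since $\mfst_{m|1}(A)$ is perfect (it surjects onto the perfect algebra $\mfsl_{m|1}(A)$, and its generators $F_{ij}(a)=[F_{ik}(a),F_{kj}(1)]$ are commutators — note $m+n\geq 3$ is exactly what is needed to pick a third index $k$), splitting is equivalent to $\mfst_{m|1}(A)$ being its own universal central extension, i.e. centrally closed. Concretely, pick lifts $\wt F_{ij}(a)\in\psi^{-1}(F_{ij}(a))$ and define $v_{ij}(a):=[\wt F_{ik}(a),\wt F_{kj}(1)]$ for some third index $k$; as in Lemma \ref{defeta3} these commutators are independent of the choice of representatives, and I expect the analogue of that lemma (independence of $k$) to carry through whenever there are at least two admissible choices of $k$ — which requires $m+n\geq 4$ for a free choice, so the case $m+n=3$ (i.e. $\mfst_{2|1}$) will need separate care. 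The goal is then to verify that the $v_{ij}(a)$ satisfy the defining relations \eqref{st1}–\eqref{st3}, so that $F_{ij}(a)\mapsto v_{ij}(a)$ defines a section.

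For $\mfst_{3|1}(A)$ ($m+n=4$) the argument should run close to the proof of Theorem \ref{ext5}: relation \eqref{st1} is immediate from $K$-linearity of the lifts modulo $\Ker\psi$, and relation \eqref{st2} follows by the same Jacobi-identity expansion $[v_{ij}(a),v_{jk}(b)]=[v_{ij}(a),[\wt F_{jl}(b),\wt F_{lk}(1)]]$ using a fourth index $l\neq i,j,k$ — available precisely because $m+n=4$. The essential new difficulty is relation \eqref{st3}, $[v_{ij}(a),v_{kl}(b)]=0$ for four distinct indices $i,j,k,l$: the proof of \eqref{w3} in Theorem \ref{ext5} needed a \emph{fifth} index $t$, which does not exist when $m+n=4$. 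So the hard part is to prove this commutativity relation using only four indices. The standard trick (going back to the $\mfsl_4$ case) is to write one of the generators as a commutator \emph{within} the four available indices and reorganize via the Jacobi identity and the already-established relations \eqref{st2}; for instance express $v_{kl}(b)$ using $v_{ki}(\cdot)$ and $v_{il}(\cdot)$ or $v_{kj}(\cdot)$ and $v_{jl}(\cdot)$, push $v_{ij}(a)$ through, and use that $[v_{ij}(a),v_{kj}(\cdot)]$ and similar "one shared index" brackets can themselves be rewritten. I expect this to be the genuinely delicate computation, possibly requiring the invertibility of $2$ and a symmetrization argument.

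For $\mfst_{2|1}(A)$ ($m+n=3$) there are only three indices, so neither the "fourth index" expansion for \eqref{st2} nor anything involving four distinct indices is needed — relation \eqref{st3} is vacuous — but now even the independence-of-$k$ statement is vacuous/trivial and one must instead carefully set up $v_{ij}(a)$ and check \eqref{st2} using the single available third index together with the structure results of Section 4, in particular the decomposition $\mfst_{2|1}(A)=\mcN^+\oplus\mcH\oplus\mcN^-$ from \eqref{direct} and the bracket identities of Lemma \ref{HN}. Here the likely strategy is: show $\Ker\psi$ meets a complement of $\mcH$ trivially after lifting, reducing the problem to understanding lifts of the Cartan-type elements $H_{ij}(a,b)$, and then check directly that the relations forced by Lemma \ref{HN}(1)–(3) lift consistently; the subtlety is that $\mfsl_{2|1}$ is small enough that the $2$-cocycle space could a priori be nonzero, so one must genuinely exploit that $A$ carries a $\Z/2\Z$-grading with $2$ invertible and that the relevant cyclic-homology obstruction vanishes. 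I anticipate the $\mfst_{2|1}$ case, not the $\mfst_{3|1}$ case, may actually be the more intricate bookkeeping, precisely because the general-position arguments of Theorem \ref{ext5} are unavailable and everything must be done by hand using \eqref{direct} and Lemma \ref{HN}.
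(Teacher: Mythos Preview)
Your proposal contains a genuine error that undermines both halves of the argument: you have misread relation \eqref{st3}. The condition ``$i\neq j\neq k\neq l\neq i$'' is a \emph{cyclic} chain of inequalities, not a demand that all four indices be distinct. In particular it allows $i=k$ (giving $[F_{ij}(a),F_{il}(b)]=0$ for distinct $i,j,l$), allows $j=l$ (giving $[F_{ij}(a),F_{kj}(b)]=0$), and even allows $(i,j)=(k,l)$ (giving $[F_{ij}(a),F_{ij}(b)]=0$). Hence relation \eqref{st3} is \emph{not} vacuous for $\mfst_{2|1}(A)$: with three indices you must still verify all of these ``shared-index'' vanishing relations for your lifts, and your proposed detour through the decomposition \eqref{direct} and cyclic-homology considerations does not address this at all. (Incidentally, $\HC_1(A)$ measures the kernel of $\mfst\to\mfsl$, not the obstruction to $\mfst$ being centrally closed, so that line of thought is misdirected.) For $\mfst_{3|1}(A)$ the same misreading means you have focused entirely on the genuinely-four-distinct-index case of \eqref{st3} and silently omitted the shared-index cases, which also require proof.

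The paper's approach differs from yours in a way that handles all of this uniformly. Rather than defining $v_{ij}(a)=[\wt F_{ik}(a),\wt F_{kj}(1)]$ and verifying relations directly (the Theorem \ref{ext5} template), the paper \emph{normalizes} the lifts by replacing $\wt F_{ij}(a)$ with $[\wt H_{ik}(1,1),\wt F_{ij}(a)]$, where $\wt H_{ik}(1,1)=[\wt F_{ik}(1),\wt F_{ki}(1)]$. This immediately forces $K$-linearity and, via the super Jacobi identity, yields $\wt F_{ij}(ab)=[\wt F_{ik}(a),\wt F_{kj}(b)]$ --- so relation \eqref{st2} holds on the nose. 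The vanishing relations \eqref{st3} are then killed case-by-case by applying $\mathrm{ad}\,\wt H_{pq}(1,1)$ for a judicious choice of $(p,q)$ to the (central) element $[\wt F_{ij}(a),\wt F_{kl}(b)]$ and reading off the Jacobi expansion. The shared-index cases fall out easily; your intuition that the four-distinct-index case (only present for $\mfst_{3|1}$) requires $2\in K^\times$ is correct, and the mechanism is precisely this: choosing $p,q$ both even so that $[\wt H_{pq}(1,1),\wt F_{pq}(a)]=\wt F_{pq}(2a)$ by the analogue of Lemma~\ref{HN}\eqref{HF2}, whence Jacobi gives $0=2[\wt F_{pq}(a),\wt F_{kl}(b)]$. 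Your proposed ``rewrite $v_{kl}(b)$ through the available indices'' manoeuvre tends to cycle back to another four-distinct-index commutator rather than terminating; the $\wt H$-action is what actually closes the argument.
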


\begin{proof}
Suppose that
$$
\xymatrix@C=01cm{
0 \ar[r] & \mcV \ar[r] & \mfe \ar[r]^-{\psi} & \mfst_{m|1}(A) \ar[r] & 0}
$$
is a central extension of $\mfst_{m|1}(A)$ for $m=2$ or $m=3$.
We need to show that there exists a unique Lie superalgebra homomorphism
$\eta:\mfst_{m|1}(A) \rightarrow \mfe$ so that $\psi\circ\eta =\id_{\mfst_{m|1}(A)}$.

First, for any $F_{ij}(a) \in \mfst_{m|1}(A)$, we can choose some $\wt{F}_{ij}(a) \in \psi^{-1}(F_{ij}(a))$
and $[\wt{F}_{ij}(a),\wt{F}_{kl}(b)]$ doesn't depend on
the choice of representatives in $ \psi^{-1}(F_{ij}(a)) $ and $\psi^{-1}(F_{kl}(b))$.
Let $\wt{H}_{ij}(a,b)=[\wt{F}_{ij}(a),\wt{F}_{ji}(b)]$,
then
$$
[\wt{H}_{ik}(1,1),\wt{F}_{ij}(a)]-\wt{F}_{ij}(a) \in \Ker(\psi),
$$
where $i,j,k$ are distinct.
Replacing $\wt{F}_{ij}(a)$ by $[\wt{H}_{ik}(1,1),\wt{F}_{ij}(a)]$, then the elements
$\wt{F}_{ij}(a)$ satisfy the relation (1). Moreover, we have $[\wt{H}_{ik}(1,1),\wt{F}_{ij}(a)]=\wt{F}_{ij}(a)$.
By the super Jacobi identity, we have
$$
\left[\wt{H}_{ik}(1,1),[\wt{F}_{ik}(a),\wt{F}_{kj}(b)]\right]=\left[[\wt{H}_{ik}(1,1),\wt{F}_{ik}(a)],\wt{F}_{kj}(b)\right]+
\left[\wt{F}_{ik}(a),[\wt{H}_{ik}(1,1),\wt{F}_{kj}(b)]\right],
$$
which implies
\begin{align*}
[\wt{H}_{ik}(1,1),\wt{F}_{ij}(ab)]&=[\wt{F}_{ik}(a+(-1)^{|i|+|k|}a),\wt{F}_{kj}(b)]-(-1)^{|i|+|k|}[\wt{F}_{ik}(a),\wt{F}_{kj}(b)]\\
&=[\wt{F}_{ik}(a),\wt{F}_{kj}(b)].
\end{align*}
Thus
\begin{equation}
\wt{F}_{ij}(ab)=[\wt{F}_{ik}(a),\wt{F}_{kj}(b)].
\end{equation}
Now, for $k\neq i \neq j \neq k$,
we have
\begin{equation*}
[\wt{F}_{ij}(a),\wt{F}_{ij}(b)]=\left[\wt{F}_{ij}(a),[\wt{F}_{ik}(b),\wt{F}_{kj}(1)]\right]=0,
\end{equation*}
which follows from the super Jacobi identity and $[\wt{F}_{ij}(a),\wt{F}_{ik}(b)], [\wt{F}_{ij}(a),\wt{F}_{kj}(1)] \in \Ker(\psi)$.

Next, we will show that 
$[\wt{F}_{ij}(a),\wt{F}_{kl}(b)]=0$
for $i\neq j\neq k \neq l \neq i$ and $(i,j) \neq (k,l)$. We will consider the following three cases.

\setcounter{case}{0}
\begin{case}
If $|i|=|j|=|k|=|l|$, then we have $m=3$ and $\{i,j,k,l\}=\{1,2,3\}$.
\end{case}
If $i \neq k$, then
\begin{equation*}
\begin{split}
0&=\left[\wt{H}_{i4}(1,1),[\wt{F}_{ij}(a),\wt{F}_{kl}(b)]\right]=\left[[\wt{H}_{i4}(1,1),\wt{F}_{ij}(a)],\wt{F}_{kl}(b)\right]
+\left[\wt{F}_{ij}(a),[\wt{H}_{i4}(1,1),\wt{F}_{kl}(b)]\right]   \\
&=[\wt{F}_{ij}(a),\wt{F}_{kl}(b)]+0=[\wt{F}_{ij}(a),\wt{F}_{kl}(b)].
\end{split}
\end{equation*}
Similarly, we can prove $[\wt{F}_{ij}(a),\wt{F}_{kl}(b)]=0$ if $j \neq l$.

\begin{case}
If $\{i,j,k,l\}=\{1,2,3,4\}$, then we can assume $|i|+|j|=0$.
\end{case}
Now we have
\begin{equation*}
\begin{split}
0&=\left[\wt{H}_{ij}(1,1),[\wt{F}_{ij}(a),\wt{F}_{kl}(b)]\right]=\left[[\wt{H}_{ij}(1,1),\wt{F}_{ij}(a)],\wt{F}_{kl}(b)\right]
+\left[\wt{F}_{ij}(a),[\wt{H}_{ij}(1,1),\wt{F}_{kl}(b)]\right]   \\
&=2[\wt{F}_{ij}(a),\wt{F}_{kl}(b)]+0=2[\wt{F}_{ij}(a),\wt{F}_{kl}(b)].
\end{split}
\end{equation*}
Since $2$ is invertible in $K$, we obtain $[\wt{F}_{ij}(a),\wt{F}_{kl}(b)]=0$.

\begin{case}
Otherwise, we can assume $|i|+|j|=1$ and $i=k$ or $j=l$.
\end{case}

If $i=k$, then
\begin{equation*}
\begin{split}
0&=\left[\wt{H}_{ij}(1,1),[\wt{F}_{ij}(a),\wt{F}_{il}(b)]\right]=\left[[\wt{H}_{ij}(1,1),\wt{F}_{ij}(a)],\wt{F}_{il}(b)\right]
+\left[\wt{F}_{ij}(a),[\wt{H}_{ij}(1,1),\wt{F}_{il}(b)]\right]   \\
&=0+[\wt{F}_{ij}(a),\wt{F}_{il}(b)]=[\wt{F}_{ij}(a),\wt{F}_{il}(b)].
\end{split}
\end{equation*}
Similarly, we can prove $[\wt{F}_{ij}(a),\wt{F}_{kj}(b)]=0$.

The above results imply that
\begin{equation}
[\wt{F}_{ij}(a), \wt{F}_{kl}(b)] = 0, \text{ for } i\neq j\neq k \neq l \neq i,  a, b\in A.
\end{equation}
Now since that $\wt{F}_{ij}(a)$ satisfy the relations \eqref{st1}-\eqref{st3}, there exists a unique Lie superalgebra homomorphism
$$
\eta:\mfst_{m|1}(A) \rightarrow \mfe
$$
such that $\eta(F_{ij}(a))=\wt{F}_{ij}(a)$.
Evidently, $\psi\circ\eta=\id_{\mfst_{m|1}(A)}$ which implies that the original sequence splits.
So $\mfst_{m|1}(A)$ is centrally closed when $m=2$ or $m=3$.
\end{proof}

\subsection{Universal central extensions of $\mfst_{2|2}(A)$}

In this subsection, we will construct explicitly the universal central extension of $\mfst_{2|2}(A)$. Let $\mcI$ be the 2-sided $\Z_2$-graded ideal of $A$ generated by the elements:
$ab-(-1)^{|a||b|}ba$, for homogeneous elements $a,b \in A$.
Let $ A_0:=A/\mcI$ be the quotient superalgebra over $K$. Then $A_0$ is super commutative.
Write $\bar a=a+\mcI$ for $a\in A$.
We define
\begin{equation*}
P_1 = \{(3,1,4,2),\; (3,2,4,1), \; (4,1,3,2),\; (4,2,3,1)\},
\end{equation*}
and
\begin{equation*}
P_2 = \{(1,3,2,4),\; (1,4,2,3), \; (2,3,1,4),\; (2,4,1,3)\}.
\end{equation*}

For $(i, j, k, l) \in P_1\bigsqcup P_2$, let $\epsilon_{ijkl}(A_0)$ denote a copy of $A_0$. We will identify $\epsilon_{ijkl}(\bar{a})$, $\epsilon_{ilkj}(\bar{a})$, $\epsilon_{kjil}(\bar{a})$ and $\epsilon_{klij}(\bar{a})$ for $\bar{a}\in A_0$.
Thus, we have two distinct copies of $A_0$ whose direct sum is denoted by $\mcW$.
Using the decomposition \eqref{direct} of $\mfst_{2|2}(A)$,
we define a $K$-bilinear map $$\psi:\mfst_{2|2}(A)\times\mfst_{2|2}(A) \to \mcW$$ by
$\psi(F_{ij}(a),F_{kl}(b))=(-1)^{j+k+|b|}\epsilon_{ijkl}(\overline{ab})$ for $(i, j, k, l) \in P_1 \bigsqcup P_2$ 
and homogeneous elements $a,b\in A$,
and by $\psi(x,y)=0$ for all other pairs of elements from the summands of \eqref{direct}.
The following lemma shows that the above map $\psi$ is a super $2$-cocycle, thus using $\psi$ we can construct a central extension of $\mfst_{2|2}(A)$.

\begin{lem} \label{cocy2}
The bilinear map $\psi$ is a super $2$-cocycle.
\end{lem}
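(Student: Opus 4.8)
The plan is to verify directly that $\psi$ satisfies the two defining conditions of a super $2$-cocycle: alternation $\psi(x,y) + (-1)^{|x||y|}\psi(y,x) = 0$, and the cyclic identity
$$(-1)^{|x||z|}\psi(x,[y,z]) + (-1)^{|y||x|}\psi(y,[z,x]) + (-1)^{|z||y|}\psi(z,[x,y]) = 0.$$
Since $\psi$ is $K$-bilinear and of degree $0$, it suffices to check these on homogeneous elements drawn from the summands of the decomposition \eqref{direct}, i.e.\ on elements of the form $F_{ij}(a)$ with $i \neq j$ and on elements $H_{ij}(a,b)$ of $\mcH$. First I would record the effect of an index permutation: the identifications of $\epsilon_{ijkl}$ with $\epsilon_{ilkj}$, $\epsilon_{kjil}$, $\epsilon_{klij}$ mean that the four tuples in each $P_s$ indexing a given copy of $A_0$ are exactly one orbit; one checks that the sign $(-1)^{j+k+|b|}$ in the definition is compatible with these identifications, and that swapping $(i,j,k,l) \leftrightarrow (k,l,i,j)$ together with $a \leftrightarrow b$ and using supercommutativity of $A_0$ gives precisely the alternation property. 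The pairs not in $P_1 \sqcup P_2$ contribute $0$ on both sides, so alternation holds.

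For the cyclic identity, the main work is a case analysis according to how many of $x,y,z$ lie in $\mcH$ versus in the off-diagonal part $\sum_{i\neq j}F_{ij}(A)$. If all three are off-diagonal, say $x = F_{i_1 j_1}(a)$, $y = F_{i_2 j_2}(b)$, $z = F_{i_3 j_3}(c)$, then each bracket $[y,z]$ etc.\ is again a sum of off-diagonal terms plus possibly a term in $\mcH$ (when the two factors are transpose-located); since $\psi$ vanishes on $\mcH$ in both slots, only the off-diagonal parts of the brackets matter, and $\psi$ applied to them is nonzero only when the resulting index tuple lies in $P_1 \sqcup P_2$. One then checks that for each way the six indices can be configured so that some term is nonzero, the nonzero contributions cancel in pairs; this is where the specific choice of $P_1$, $P_2$ and the sign convention $(-1)^{j+k+|b|}$ is forced. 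If exactly one of $x,y,z$, say $x$, lies in $\mcH$ and the other two are off-diagonal, then $\psi(y,[z,x])$ and $\psi(z,[x,y])$ can be nonzero only through the off-diagonal parts of $[z,x]$ and $[x,y]$, which by Lemma \ref{HN} are again off-diagonal matrix units (with a shifted coefficient), while $\psi(x,[y,z]) = 0$ because $x \in \mcH$; one verifies the two surviving terms cancel. The remaining cases—two or three of $x,y,z$ in $\mcH$—give brackets landing in $\mcH$ or in the off-diagonal part in a way that makes every $\psi$-term vanish, so the identity is trivially $0 = 0$.

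The main obstacle I expect is the bookkeeping in the all-off-diagonal case: tracking the Koszul signs coming from the super Jacobi identity, from $\deg F_{ij}(a) = |i|+|j|+|a|$, and from the degrees of the elements being paired, and reconciling them with the sign $(-1)^{j+k+|b|}$ in $\psi$, so that the three cyclic terms genuinely cancel rather than merely matching up to sign. A clean way to organize this is to fix one representative tuple, say $(3,1,4,2) \in P_1$, work out that case completely, and then argue that every other nonzero configuration is obtained from it by one of the allowed index symmetries or by the $P_1 \leftrightarrow P_2$ transpose symmetry, each of which has already been shown to preserve the cocycle relations. Once the sign compatibilities are pinned down, the verification reduces to a finite and routine—if tedious—check.
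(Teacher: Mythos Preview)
Your plan is essentially the same as the paper's: verify alternation directly using the identifications among the $\epsilon_{ijkl}$ and supercommutativity of $A_0$, then check the cyclic identity by a case analysis on the decomposition \eqref{direct}. The paper organizes the cases slightly differently (it first assumes one term of $J(x,y,z)$ is nonzero, forcing $z=F_{pq}(d)$ and $[x,y]\in F_{st}(A)$ with $(s,t,p,q)\in P_1\sqcup P_2$, and then splits on whether $x$ or $y$ lies in $\mcH$), but this amounts to the same computations as your split by ``how many of $x,y,z$ are in $\mcH$.''

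One point to watch: in your ``exactly one in $\mcH$'' case you say the two surviving terms cancel, but there is a subcase where only \emph{one} term survives and must vanish on its own. Namely, if $x=H_{st}(a,b)$, $y=F_{st}(c)$, $z=F_{pq}(d)$ with $(s,t,p,q)\in P_1\sqcup P_2$, then $[x,z]=0$ (since $\{s,t\}\cap\{p,q\}=\emptyset$), so the only potentially nonzero term is $\psi([x,y],z)=\psi\bigl(F_{st}(abc+(-1)^{|s|+|t|+|a||b|+|b||c|+|c||a|}cba),\,F_{pq}(d)\bigr)$. This vanishes not by cancellation against another term but because $|s|+|t|=1$ and $\overline{abc}=(-1)^{|a||b|+|b||c|+|c||a|}\overline{cba}$ in $A_0$; here the passage to the supercommutative quotient $A_0=A/\mcI$ is doing real work. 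Make sure your write-up isolates this subcase rather than folding it into a ``pairwise cancellation'' narrative.
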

\begin{proof}
First, we have
\begin{equation*}
\begin{split}
\psi(F_{kl}(b),F_{ij}(a)) = & (-1)^{i+l+|a|} \epsilon_{klij}(\overline{ba}) 
= (-1)^{k+j+|a|+|a||b|} \epsilon_{ijkl}(\overline{ab})  \\
= & - (-1)^{1+|a|+|b|+|a||b|} (-1)^{k+j+|b|} \epsilon_{ijkl}(\overline{ab}) \\
= & - (-1)^{(|i|+|j|+|a|)(|k|+|l|+|b|)} (-1)^{k+j+|b|} \epsilon_{ijkl}(\overline{ab}) \\
= & - (-1)^{(\deg F_{ij}(a))(\deg F_{kl}(b))} \psi(F_{ij}(a),F_{kl}(b)), 
\end{split}
\end{equation*}
where $(i,j,k,l) \in P_1 \bigsqcup P_2$, $a,b \in A$ are homogeneous 
and we use the fact that $|i|+|j|=|k|+|l|=1$. Thus $\psi$ is (super) skew-symmetric.

Next, we will show $J(x,y,z)=0$, where
$$
J(x,y,z)=(-1)^{\deg(x)\deg(z)}\psi([x,y], z) + (-1)^{\deg(x)\deg(y)}\psi([y, z], x) + (-1)^{\deg(y)\deg(z)}\psi([z, x], y)
$$ 
for the homogenous elements $x,y,z \in \mfst_{2|2}(A)$ in summands of \eqref{direct}.
If a term of $J(x,y,z)$ is not $0$, we can assume that $z=F_{pq}(d)$ and $0 \neq [x,y] \in F_{st}(A)$
with $(s,t,p,q) \in P_1 \bigsqcup P_2$. Then we have
\begin{equation*}
\psi([H_{st}(a,b),F_{st}(c)],F_{pq}(d)) = \psi(F_{st}(abc+(-1)^{|s|+|t|+|a||b|+|b||c|+|c||a|}cba),F_{pq}(d)) =0,
\end{equation*}
since $|s|+|t|=1$ and $\overline{abc} = (-1)^{|a||b|+|b||c|+|c||a|}\overline{cba}$.

\setcounter{case}{0}
\begin{case}
If $x$ or $y$ is in $\mcH$, then we can assume $x = H_{ij}(a,b)$, $y=F_{st}(c)$ and $\{i,j\} \cap \{s,t\} \neq \emptyset$.
\end{case}
We only consider one of the four possible cases: $i=s$ and $j=p$. Then we have
\begin{equation*}
\begin{split}
(-1)^{\deg(x)\deg(z)}J(x,y,z) = & \psi([x,y], z) - (-1)^{\deg(y)\deg(z)}\psi([x,z], y) \\
= & \psi([H_{sp}(a,b),F_{st}(c)], F_{pq}(d)) \\
& \quad - (-1)^{(1+|c|)(1+|d|)}\psi([H_{sp}(a,b),F_{pq}(d)], F_{st}(c)) \\
= & \psi(F_{st}(abc), F_{pq}(d)) + (-1)^{(1+|c|)(1+|d|)+|a||b|} \psi(F_{pq}(bad), F_{st}(c))\\
= & (-1)^{t+p+|d|}\epsilon_{stpq}(\overline{abcd}) + (-1)^{(1+|c|)(1+|d|)+|a||b|+q+s+|c|} \epsilon_{pqst}(\overline{badc}) \\
= & (-1)^{t+p+|d|}\epsilon_{stpq}(\overline{abcd}) + (-1)^{1+|d|+q+s} \epsilon_{pqst}(\overline{abcd}) \\
= & 0.
\end{split}
\end{equation*}

\begin{case}
If neither $x$ nor $y$ is in $\mcH$, then we can assume that $x = F_{sp}(a)$, $y=F_{pt}(b)$ or $x = F_{sq}(a)$, $y=F_{qt}(b)$.
\end{case}

We only consider the case when $x = F_{sp}(a)$ and $y=F_{pt}(b)$. Then we have
\begin{equation*}
\begin{split}
(-1)^{\deg(x)\deg(z)}J(x,y,z) = & \psi([x,y], z) - (-1)^{\deg(y)\deg(z)}\psi([x,z], y) \\
= & \psi([ F_{sp}(a),F_{pt}(b)], F_{pq}(d)) - (-1)^{(1+|b|)(1+|d|)}\psi([F_{sp}(a),F_{pq}(d)], F_{pt}(b)) \\
= & \psi(F_{st}(ab), F_{pq}(d)) - (-1)^{(1+|b|)(1+|d|)} \psi(F_{sq}(ad), F_{pt}(b))\\
= & (-1)^{t+p+|d|}\epsilon_{stpq}(\overline{abd}) - (-1)^{(1+|b|)(1+|d|)+q+p+|b|} \epsilon_{sqpt}(\overline{adb}) \\
= & (-1)^{t+p+|d|}\epsilon_{stpq}(\overline{abcd}) - (-1)^{|d|+q+p} \epsilon_{pqst}(\overline{abcd}) \\
= & 0,
\end{split}
\end{equation*}
where we use $\{t,q\} = \{1,2\}$ or $\{3,4\}$ for $(s,t,p,q) \in P_1 \bigsqcup P_2$.

Similarly, we can show that $J(x,y,z)=0$ if $x = F_{sq}(a)$ and $y=F_{qt}(b)$.
\end{proof}

We therefore obtain a central extension of the Lie superalgebra $\mfst_{2|2}(A)$,
$$
0\longrightarrow{\mathcal W}\longrightarrow \mfst_{2|2}(A) \oplus {\mathcal W} \overset{\pi}\longrightarrow\mfst_{2|2}(A)\longrightarrow 0
$$
with Lie superbracket $[(x,c),(y,c')]=([x,y],\psi(x,y))$ for all $x,y \in \mfst_{2|2}(A)$ and $c,c'\in{\mathcal W}$.
$\pi$ is the projection on the first summand.
We denote $\mfst_{2|2}(A) \oplus {\mathcal W}$ with the above bracket by $\mfst_{2|2}(A)^\sharp$.

\begin{thm}
The universal central extension of $\mfst_{2|2}(A)$ is $(\mfst_{2|2}(A)^\sharp, \pi)$.
\end{thm}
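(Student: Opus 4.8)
The plan is to invoke the standard criterion, valid for Lie superalgebras (see \cite{Ne}): a central extension $(\wt L,\phi)$ of a perfect Lie superalgebra $L$ is the universal central extension of $L$ if and only if $\wt L$ is perfect and centrally closed. Here $\mfst_{2|2}(A)$ is perfect, and $\pi$ is a central extension because $\mcW=\Ker(\pi)$ is central in $\mfst_{2|2}(A)^\sharp$: one has $[(x,c),(0,w)]=([x,0],\psi(x,0))=(0,0)$. So the proof reduces to two points: that $\mfst_{2|2}(A)^\sharp$ is perfect, and that every central extension of it splits. (Combined with the fact that $\mfst_{2|2}(A)$ is itself a perfect central extension of $\mfsl_{2|2}(A)$ via Proposition~\ref{inH}, this will simultaneously identify $\mfst_{2|2}(A)^\sharp$ with the universal central extension of $\mfsl_{2|2}(A)$.)

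For perfectness: $\mfst_{2|2}(A)$ is perfect since $F_{ij}(a)=[F_{ik}(a),F_{kj}(1)]$ for distinct $i,j,k$, so it remains to exhibit $\mcW$ inside the derived superalgebra of $\mfst_{2|2}(A)^\sharp$. For each $(i,j,k,l)\in P_1\sqcup P_2$ the four indices are distinct, so by \eqref{st3} we have $[F_{ij}(a),F_{kl}(1)]=0$ in $\mfst_{2|2}(A)$, and therefore
\[
\big[(F_{ij}(a),0),(F_{kl}(1),0)\big]=\big(0,\psi(F_{ij}(a),F_{kl}(1))\big)=\big(0,(-1)^{j+k}\epsilon_{ijkl}(\bar a)\big)
\]
in $\mfst_{2|2}(A)^\sharp$. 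As $a$ ranges over $A$ these elements span each of the two copies of $A_0$ comprising $\mcW$, so $\mcW\subseteq[\mfst_{2|2}(A)^\sharp,\mfst_{2|2}(A)^\sharp]$; together with the perfectness of $\mfst_{2|2}(A)$ this gives $\mfst_{2|2}(A)^\sharp=[\mfst_{2|2}(A)^\sharp,\mfst_{2|2}(A)^\sharp]$. In particular $\mfst_{2|2}(A)^\sharp$ is generated by the elements $(F_{ij}(a),0)$.

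The main part is to show $\mfst_{2|2}(A)^\sharp$ is centrally closed. Given a central extension $0\to\mcV\to\mfe\xrightarrow{\rho}\mfst_{2|2}(A)^\sharp\to 0$, I would build a splitting by the method of the proof of Theorem~\ref{2131}. Choose lifts $\wh F_{ij}(a)\in\rho^{-1}\big((F_{ij}(a),0)\big)$; since $\mcV$ is central, brackets of lifts do not depend on the chosen representatives. Put $\wh H_{ij}(a,b):=[\wh F_{ij}(a),\wh F_{ji}(b)]$ and replace each $\wh F_{ij}(a)$ by $[\wh H_{ik}(1,1),\wh F_{ij}(a)]$ for a third index $k$; by Lemma~\ref{HN} (the relevant brackets being unaffected by the central summand $\mcW$) this leaves the image in $\mfst_{2|2}(A)^\sharp$ unchanged and forces \eqref{st1} and \eqref{st2} to hold exactly in $\mfe$ — there is no correction term, as $\psi$ vanishes on every pair $(F_{ij}(a),F_{jk}(b))$ with a repeated middle index. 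It then remains to compute $[\wh F_{ij}(a),\wh F_{kl}(b)]$ for $i\neq j\neq k\neq l\neq i$: applying the super Jacobi identity together with the action of the $\wh H_{ij}(1,1)$, in a case analysis parallel to Cases~1--3 of the proofs of Theorem~\ref{2131} and Lemma~\ref{cocy2}, one shows that this bracket equals the distinguished lift of $\big(0,\psi(F_{ij}(a),F_{kl}(b))\big)$ determined by the above normalization — i.e.\ the lifted generators satisfy the ``twisted'' version of \eqref{st3} valid in $\mfst_{2|2}(A)^\sharp$. Since $\mfst_{2|2}(A)^\sharp$ is presented by the generators $(F_{ij}(a),0)$ subject to \eqref{st1}, \eqref{st2} and this twisted \eqref{st3} — a presentation that itself follows from the decomposition \eqref{direct} and the cocycle identity of Lemma~\ref{cocy2} — the assignment $(F_{ij}(a),0)\mapsto\wh F_{ij}(a)$ extends to a Lie superalgebra homomorphism $\eta:\mfst_{2|2}(A)^\sharp\to\mfe$; as $\rho\circ\eta=\id$ on generators, it is the identity everywhere by perfectness, so $\rho$ splits.

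The hard part is the case analysis establishing the twisted \eqref{st3} — equivalently, the fact that one may consistently declare $\eta(\epsilon_{ijkl}(\bar a))=[\wh F_{ij}(a),\wh F_{kl}(1)]$ up to sign. One must check that these lifted brackets respect exactly the prescribed identifications $\epsilon_{ijkl}=\epsilon_{ilkj}=\epsilon_{kjil}=\epsilon_{klij}$ and the super-commutativity of $A_0$, and produce no further central elements of $\mfe$; this amounts to re-running the cocycle computation of Lemma~\ref{cocy2} one level higher in $\mfe$, tracking signs and which quadruples lie in $P_1\sqcup P_2$ (namely those for which both $F_{ij}(a)$ and $F_{kl}(b)$ are odd, with parity pattern $(|i|,|j|,|k|,|l|)$ equal to $(1,0,1,0)$ or $(0,1,0,1)$).
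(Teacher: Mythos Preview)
Your strategy is sound but takes a detour the paper avoids. The paper verifies the universal property of $(\mfst_{2|2}(A)^\sharp,\pi)$ \emph{directly}: given an arbitrary central extension $(\mfg,\tau)$ of $\mfst_{2|2}(A)$, it chooses normalized lifts $\wt F_{ij}(a)\in\mfg$, sets $\mu_{ijkl}(a,b):=[\wt F_{ij}(a),\wt F_{kl}(b)]\in\Ker(\tau)$, and then proves by explicit computation that (i) $\mu_{ijkl}=0$ whenever $(i,j,k,l)\notin P_1\sqcup P_2$ (using $[\wt H_{ij}(1,1),\cdot]$ when $|i|+|j|=0$), (ii) $\mu_{ijkl}(\mcI,1)=0$ so $\mu$ factors through $A_0$, and (iii) the four-index symmetries $\mu_{ijkl}=-\mu_{ilkj}=\ldots$ match the identifications among the $\epsilon_{ijkl}$. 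It then \emph{defines} $\eta$ on the two summands separately, $\eta(F_{pq}(a))=\wt F_{pq}(a)$ and $\eta(\epsilon_{ijkl}(\bar a))=\mu_{ijkl}(a,1)$, and reads off that $\tau\circ\eta=\pi$. Your route --- show $\mfst_{2|2}(A)^\sharp$ is perfect and centrally closed, invoking the criterion from \cite{Ne} --- is legitimate, but it forces you to work one level higher, with central extensions of $\mfst_{2|2}(A)^\sharp$ rather than of $\mfst_{2|2}(A)$, only to arrive at exactly the same bracket analysis of the $\mu_{ijkl}$.

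Two places in your write-up are genuinely soft. First, the phrase ``the distinguished lift of $(0,\psi(F_{ij}(a),F_{kl}(b)))$'' has no meaning until you have \emph{proved} that $[\wh F_{ij}(a),\wh F_{kl}(b)]$ depends only on $\bar a\bar b\in A_0$ and obeys the $\epsilon$-identifications; that is the content of the paper's computations (i)--(iii), which you only gesture at. Second, your claim that $\mfst_{2|2}(A)^\sharp$ is \emph{presented} by \eqref{st1}, \eqref{st2} and a ``twisted \eqref{st3}'' does not follow merely from \eqref{direct} and Lemma~\ref{cocy2}; a cocycle extension need not inherit a presentation in this way. The paper sidesteps this entirely by defining $\eta$ on $\mfst_{2|2}(A)\oplus\mcW$ summand-wise rather than via a presentation. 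If you keep your framework, you should do likewise: once (i)--(iii) are established for your $\wh F$'s, set $\eta(x,w)=\wh x+\eta_\mcW(w)$ with $\eta_\mcW(\epsilon_{ijkl}(\bar a)):=[\wh F_{ij}(a),\wh F_{kl}(1)]$ (up to the sign $(-1)^{j+k}$), and check it is a homomorphism directly.
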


\begin{proof}
Suppose that
\begin{equation}
0\longrightarrow{\mathcal V}\longrightarrow \mfg\overset{\tau}{\longrightarrow}\mfst_{2|2}(A)\longrightarrow\notag 0
\end{equation}
is a central extension of $\mfst_{2|2}(A)$.
We need to show that there exists a unique Lie superalgebra homomorphism
$\eta:\mfst_{2|2}(A)^\sharp \longrightarrow \mfg$ such that $\tau\circ\eta=\pi$.

Similarly as what we did in Theorem \ref{2131},
for any $F_{ij}(a) \in \mfst_{2|2}(A)$ we can choose some preimage
$\wt{F}_{ij}(a) \in \tau^{-1}(F_{ij}(a))$ such that they  satisfy the relations (1) and
$$
[\wt{H}_{ik}(1,1),\wt{F}_{ij}(a)]=\wt{F}_{ij}(a),
$$
where $\wt{H}_{ij}(a,b)=[\wt{F}_{ij}(a),\wt{F}_{ji}(b)]$ and $i,j,k$ are distinct.
Moreover, we can get
\begin{equation*}
\wt{F}_{ij}(ab)=[\wt{F}_{ik}(a),\wt{F}_{kj}(b)],
\end{equation*}
and
\begin{equation*}
[\wt{F}_{ij}(a),\wt{F}_{ij}(b)]=[\wt{F}_{ij}(a),\wt{F}_{ik}(b)]=[\wt{F}_{ij}(a),\wt{F}_{kj}(b)]=0,
\end{equation*}
if $i,j,k$ are distinct.
By the choice of preimage $\wt{F}_{ij}(a)$,
we have $\mu_{ijkl}(a,b)=[\wt{F}_{ij}(a),\wt{F}_{kl}(b)] \in \Ker(\tau)$
for $i\neq j\neq k \neq l \neq i$. We first have
\begin{equation*}
\begin{split}
\mu_{ilkj}(bc,a) = & [\wt{F}_{il}(bc),\wt{F}_{kj}(a)] =  [[\wt{F}_{ik}(b),\wt{F}_{kl}(c)],\wt{F}_{kj}(a)]  \\
= & (-1)^{(|k|+|j|+|a|)(|k|+|l|+|c|)}  [[\wt{F}_{ik}(b),\wt{F}_{kj}(a)],\wt{F}_{kl}(c)]  \\
= & (-1)^{(|k|+|j|+|a|)(|k|+|l|+|c|)}  [\wt{F}_{ij}(ba),\wt{F}_{kl}(c)]  \\
= & (-1)^{(|k|+|j|+|a|)(|k|+|l|+|c|)}  \mu_{ijkl}(ba,c).
\end{split}
\end{equation*}
Then taking $b=1$ or $c=1$,
\begin{equation}  \label{murel1}
\mu_{ilkj}(b,a)= (-1)^{(|k|+|j|+|a|)(|k|+|l|+|b|)} \mu_{ijkl}(a,b) = (-1)^{(|k|+|j|+|a|)(|k|+|l|)}  \mu_{ijkl}(ba,1)
\end{equation}
and
\begin{equation} \label{murel}
 \mu_{ijkl}(a,b) =(-1)^{(|k|+|j|+|a|)|b|} \mu_{ijkl}(ba,1)
\end{equation}
where $a,b\in A$ are homogeneous and $i,j,k,l$ are distinct.
On the other hand, similarly as \eqref{HF2} in Lemma \ref{HN}, we have
\begin{equation*}
[\wt{H}_{ij}(a, b), \wt{F}_{ij}(c)]= \wt{F}_{ij}\Big(abc +(-1)^{(|i|+|j|+|a||b|+|b||c|+|c||a|)}cba\Big),
\end{equation*}
hence if $|i|+|j|=0$ we can get
\begin{equation*}
2\mu_{ijkl}(a,b) = [\wt{F}_{ij}(2a),\wt{F}_{kl}(b)] =  [[\wt{H}_{ij}(1,1), \wt{F}_{ij}(a)],\wt{F}_{kl}(b)]  =0,
\end{equation*}
thus $\mu_{ilkj}(b,a)=\mu_{ijkl}(a,b)=0$.
Now if $\{i,j,k,l\} = \{1,2,3,4\}$ and $(i,j,k,l) \notin P_1 \bigsqcup P_2$, then we have $\mu_{ijkl}(a,b)=0$.

On the other hand, for all $(i,j,k,l)\in P_1 \bigsqcup P_2$,
$|i|+|j|=|k|+|j|=\bar{1}$, then for any homogeneous elements $a,b,c \in A$, \eqref{murel} implies
\begin{equation*}
\begin{split}
\mu_{ijkl}\big(c(ab-(-1)^{|a|\cdot|b|}ba),1\big)
= & (-1)^{(|k|+|j|+|a|+|b|)|c|} \mu_{ijkl}(ab-(-1)^{|a|\cdot|b|}ba,c)  \\
= & (-1)^{(|k|+|j|+|a|+|b|)|c|} \mu_{ijkl}(ab+(-1)^{|i|+|j|+|a|\cdot|b|}ba,c) \\
= & (-1)^{(|k|+|j|+|a|+|b|)|c|}[\wt{F}_{ij}(ab+(-1)^{|i|+|j|+|a|\cdot|b|}ba),\wt{F}_{kl}(c)] \\
= & (-1)^{(|k|+|j|+|a|+|b|)|c|}\Big[[\wt{H}_{ij}(a,b),\wt{F}_{ij}(1)],\wt{F}_{kl}(c)\Big] \\
= & 0,
\end{split}
\end{equation*}
which shows $\mu_{ijkl}({\mathcal I},1)=0$ for $(i,j,k,l)\in P_1\bigsqcup P_2$.
In particular, we have
\begin{equation*}
\mu_{ijkl}(a,b) = (-1)^{(1+|a|)|b|} \mu_{ijkl}(ba,1) = (-1)^{|b|} \mu_{ijkl}(ab,1).
\end{equation*}

In \eqref{murel1}, taking $a=1$ we have $\mu_{ilkj}(a,1) = -\mu_{ijkl}(a,1)$ and $\mu_{ijkl}(a,1) = (-1)^{|a|}\mu_{ijkl}(1,a)$ 
for $(i,j,k,l)\in P_1\bigsqcup P_2$.
In particular, we get
\begin{equation*}
\mu_{1423}(a,1) = -\mu_{1324}(a,1)
\end{equation*}
and
\begin{equation*}
\mu_{1423}(a,1) = -(-1)^{1+|a|}\mu_{2314}(1,a) = \mu_{2314}(a,1) = -\mu_{2413}(a,1).
\end{equation*}
Similarly, we have
\begin{equation*}
\mu_{3241}(a,1) = -\mu_{3142}(a,1)= \mu_{4132}(a,1)=-\mu_{4231}(a,1).
\end{equation*}

Now we define a map $$\eta:\mfst_{2|2}(A)^\sharp \longrightarrow \mfg$$ by
$\eta(F_{pq}(a))=\wt{F}_{pq}(a)$ and $\eta(\epsilon_{ijkl}(\overline{a}))=\mu_{ijkl}(a,1)$ for $1 \leq p \neq q \leq 4$, $a \in A$ and $(i,j,k,l) \in P_1\bigsqcup P_2$.
By the above discussion, we know that $\eta$ is a unique Lie superalgebra homomorphism such that $\tau\circ\eta=\pi$.
\end{proof}

\textit{Acknowledgements.} The authors thank the referee for pointing out the reference \cite{KT} and the suggestion on adding the last section of the paper. The authors thank E. Neher for pointing out the reference \cite{M} and useful comments on an earlier version of the paper.

\noindent Hongjia Chen\\
\noindent School of Mathematical Sciences\\
\noindent University of Science and Technology of China\\
\noindent  Hefei, Anhui 230026, China\\
\noindent \textit{E-mail address}: hjchen@ustc.edu.cn
\\

\noindent Jie Sun\\
\noindent Department of Mathematical Sciences\\
\noindent Michigan Technological University\\
\noindent  Houghton, MI 49931, USA\\
\noindent \textit{E-mail address}: sjie@mtu.edu\\

\end{document}